\newtheorem{theorem}{Theorem}
\newtheorem{lemma}[theorem]{Lemma}
\newtheorem{corollary}{Corollary}
\theoremstyle{definition}                 
\newtheorem{example}{Example}            \newtheorem{defn}{Definition}
 \newtheorem*{notation}{Notation}
\newcommand{\field}[1]{\mathbb{#1}}          \newcommand{\Q}{\field{Q}}
                   \newcommand{\Z}{\field{Z}}
\newcommand{\C}{\field{C}}
\renewcommand{\a}{\alpha}
\renewcommand{\b}{\beta}                      \renewcommand{\d}{\delta}
\newcommand{\D}{\Delta}                    \newcommand{\e}{\varepsilon}
\newcommand{\g}{\gamma}                         \newcommand{\G}{\Gamma}
\renewcommand{\l}{\lambda}
\renewcommand{\th}{\theta}
 \newcommand{\fh}{\mathfrak h}
 \newcommand{\ra}{\rightarrow}
\begin{document}

\title[Differential Equations and Monodromy]  
{Differential Equations and Monodromy}

\author{T.N.Venkataramana}

{\address{ T.N.Venkataramana, School of Mathematics, TIFR, Homi Bhabha
Road, Colaba, Mumbai 400005, India}

\email{venky@math.tifr.res.in}

\date{}

\begin{abstract}  In these  expository notes,  we describe  results of
Cauchy, Fuchs and Pochhammer on  differential equations. We then apply
these  results   to  hypergeometric  differential  equation   of  type
$_nF_{n-1}$ and  describe Levelt's  theorem determining  the monodromy
representation explicitly in terms  of the hypergeometric equation. We
also give a brief overview, without  proofs, of results of Beukers and
Heckman,  on  the  Zariski  closure  of  the  monodromy  group  of  the
hypergeometric equation.  In the last  section, we recall  some recent
results  on thin-ness  and arithmeticity  of hypergeometric  monodromy
groups

\end{abstract}

\maketitle{}

\section{Introduction}\label{introsection}

In these notes,  we recall (in sections 1 to 3) the basic theory  of 
differential equations on the  unit disc and  on the punctured  unit 
disc. For  references see \cite{Cod-Lev} and \cite{Le}. \\

In  sections 4  and 5  we  apply the  theory developed  in sections  1
through 3  to (state and)  prove a result  of Levelt \cite{Le}  on the
monodromy   of   hypergeometric   differential   equations   of   type
$_nF_{n-1}$. \\

In the next few sections we use this description to prove some results
(some are  not proved  completely because the  proofs are  lengthy) of
Beukers  and Heckman  \cite{Beu-Hec}  on the  Zariski  closure of  the
monodromy  of the  foregoing hypergeometric  equation. In  particular,
they completely determine when the monodromy is finite. \\

{\bf Acknowledgments}  I thank  Professors Amarnath and  Padmavati for
inviting  me  to contribute  an  article  to  the proceedings  of  the
Telangana  Academy of  Sciences.  I  also thank  Professors F.Beukers,
Madhav Nori and P.Sarnak for very helpful conversations related to the
material presented in the paper. I am grateful to Max Planck Institute
for  Mathematics in  Bonn for  its hospitality  and financial  support
while this work was prepared for publication.

\newpage

\section{Monodromy Groups}

The  concept   of  monodromy   arises  in  many   seemingly  different
situations. We  will deal with some  of the simplest  ones, namely the
monodromy associated to linear  differential equations on open subsets
in  the complex  plane.

\subsection{Differential Equations on Open Sets in the Plane}

Let  $U$  be  a  connected  open  subset of  the  complex  plane.  Fix
holomorphic functions $f_i:U \ra \C$ with $0\leq i \leq n-1$. Consider
the differential equation
\[     \frac{d^n    y}{dz^n}     +     \sum    _{i=0}^{n-1}     f_i(z)
\frac{d^iy}{dz^i}=0.  \]  

If  $y_1,y_2$  are  solutions,  then  so  is
$c_1y_1+c_2y_2$ with $c_1,c_2\in \C$. That is, the space of solutions is
a vector space.  A fundamental result of Cauchy says that when $U$
is the unit  disc, there are holomorphic  functions $y_1, \cdots,
y_n$ on the  disc which are solutions of  this differential equation,
which are linearly independent and  such that all solutions are linear
combinations   of  these  solutions.    These  solutions   are  called
{\bf fundamental solutions}. 

\begin{theorem}  \label{cauchy}  (Cauchy)  Let  $f_0,\cdots,  f_{n-1}:
\Delta \ra \C$ be holomorphic functions on the open unit disc $\Delta$
and consider the differential equation
\[\frac{d^ny}{dz^n}+f_{n-1}(z)\frac{d^{n-1}y}{dz^{n-1}}+\cdots+
f_1(z)\frac{dy}{dz}+f_0(z)y=0.\]  Suppose  that $z_0,\cdots,  z_{n-1}$
are arbitrary complex numbers. Then there exists a solution $y$ to the
differential equation, which is holomorphic  in the whole of the disc,
such  that $\frac{d^jy}{dz^j}(0)=z_j$  for all  $j$ with  $0\leq j\leq
n-1$.\\

In particular, the differential equation has $n$ fundamental solutions.  
\end{theorem}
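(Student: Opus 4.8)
The plan is to pass to an equivalent first-order system and solve it by holomorphic Picard iteration. Introduce the vector $Y=(y,y',\dots,y^{(n-1)})$; the equation is then equivalent to $Y'=A(z)Y$, where $A$ is the companion matrix whose entries are holomorphic on $\Delta$ (ones on the superdiagonal and $-f_0,\dots,-f_{n-1}$ along the bottom row). The prescribed data become an initial vector $v=(z_0,\dots,z_{n-1})\in\C^n$, and it suffices to produce a holomorphic $Y:\Delta\ra\C^n$ with $Y'=AY$ and $Y(0)=v$: its first coordinate is then the required $y$, and the companion structure forces $y^{(j)}(0)=z_j$ for $0\leq j\leq n-1$.

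Since $\Delta$ is convex, for each $z\in\Delta$ the segment from $0$ to $z$ lies in $\Delta$, so I can integrate along it (parametrizing by $w=tz$, $t\in[0,1]$); as $A$ is holomorphic on the simply connected $\Delta$, a holomorphic $Y$ solves the system with $Y(0)=v$ exactly when it solves the integral equation $Y(z)=v+\int_0^z A(w)Y(w)\,dw$. I would define the Picard iterates $Y_0\equiv v$ and $Y_{k+1}(z)=v+\int_0^z A(w)Y_k(w)\,dw$, each manifestly holomorphic on $\Delta$.

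The analytic heart is the convergence estimate, and this is the step I expect to be the main obstacle. Fix $\rho<1$ and set $M=\sup_{|w|\leq\rho}\|A(w)\|$, which is finite since $A$ is continuous on the closed disc of radius $\rho$. Estimating along the segment and inducting on $k$, I would prove $\|Y_{k+1}(z)-Y_k(z)\|\leq\|v\|\,(M|z|)^{k+1}/(k+1)!$ for $|z|\leq\rho$. The factorial denominator forces the telescoping series $\sum_k(Y_{k+1}-Y_k)$ to converge uniformly on $|z|\leq\rho$; its sum $Y$ is a uniform limit of holomorphic functions, hence holomorphic (by Morera's theorem), and passing to the limit in the integral equation shows $Y$ solves it, so $Y'=AY$ and $Y(0)=v$. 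Since $\rho<1$ was arbitrary, these local solutions agree and patch to a single holomorphic solution on all of $\Delta$, giving the desired $y$.

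Finally, for the fundamental solutions I would apply the existence result to the standard basis data, obtaining solutions $y_0,\dots,y_{n-1}$ with $y_i^{(j)}(0)=\delta_{ij}$ for $0\leq i,j\leq n-1$. Their Wronskian matrix at $0$ is the identity, whose determinant is $1\neq 0$, so the $y_i$ are linearly independent. The same factorial estimate yields uniqueness (two solutions with equal initial data differ by a solution with $v=0$, which the estimate forces to vanish identically); hence for any solution $y$ the combination $\sum_{i=0}^{n-1} y^{(i)}(0)\,y_i$ has the same derivatives at $0$ as $y$ and therefore equals $y$. Thus $y_0,\dots,y_{n-1}$ form a basis of the solution space, completing the proof.
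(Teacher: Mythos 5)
Your proof is correct, but it solves the underlying system by a genuinely different method from the paper's. Both arguments rest on the same reduction — the companion-matrix substitution $Y=(y,y',\dots,y^{(n-1)})$ turning the order-$n$ scalar equation into a first-order system $Y'=A(z)Y$ — though you perform it at the outset while the paper performs it at the end. The paper then solves the system by positing a power series $Y=\sum x_kz^k$, reading off a recursion for the coefficients, and proving convergence by a majorant estimate on $|x_k|R^k$; as written that estimate only yields convergence on a strictly smaller disc, so the paper must re-expand around other centres and invoke analytic continuation to get a solution on all of $\Delta$. You instead recast the initial-value problem as the integral equation $Y(z)=v+\int_0^z A(w)Y(w)\,dw$ (legitimate since $\Delta$ is convex and $A$ is holomorphic) and run Picard iteration; the factorial in your estimate gives uniform convergence on every closed sub-disc $|z|\le\rho<1$ directly, so no continuation step is needed, and uniqueness of solutions with given initial data falls out of the same bound. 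Your construction of the fundamental solutions from the standard basis of initial data and the Wronskian at $0$ is equivalent to the paper's observation that a matrix-valued solution is determined by its value at $0$. Both routes are standard; yours is arguably cleaner on the global-convergence point, while the paper's power-series recursion has the advantage of being the exact template it reuses later in the proof of the Fuchs theorem at a regular singular point.
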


\begin{proof} We  first prove  this when $n=1$.  Suppose then  that we
have the equation
\[\frac{dy}{dz}+f_0(z)y=0,   \]  where   $f_0(z)=\sum  _{k=0}^{\infty}
a_kz^k$ a power  series which converges in $\mid  z \mid <1$.  Suppose
$y(z)=\sum _{k=0}^{\infty} x_kz^k$ is a formal power series with $x_k$
a  sequence of  elements of  $\C$. By  looking at  the  coefficient of
$z^{k-1}$  on  both sides  (which  are  formal  power series)  of  the
differential equation, it follows that, if the formal power series $y$
is to be a solution of  the differential equation, then the $x_k$ (for 
$k \geq 1$) must
satisfy the recursive relation 
\begin{equation}                \label{recursive}               -kx_k=
x_{k-1}a_0+x_{k-2}a_1+\cdots+x_0a_{k-1}.\end{equation} Let $R<1$; then
the convergence of $f_0(z)$ in $\mid  z \mid <1$ implies that there is
a constant  $M\geq 1$ such that  $\mid a_k\mid R^k <M$  for all $k\geq
0$.   Suppose $r<R$  is fixed.  Let, for  each $j$,  $M_j$  denote the
supremum
\[M_j=sup\{\mid  x_j\mid R^j, \mid  x_{j-1}\mid R^{j-1},  \cdots, \mid
x_1\mid R, \mid x_0\mid \} .\]
The equation (\ref{recursive}) shows that for each $k\geq 1$ we have 
\[k\mid         x_k\mid         \leq         \frac{M_{k-1}}{R^{k-1}}M+
\frac{M_{k-1}}{R^{k-2}}\frac{M}{R}+              \cdots              +
\frac{M_{k-1}}{R}\frac{M}{R^{k-2}}+M\frac{M}{R^{k-1}}=k\frac{M_{k-1}M}
{R^{k-1}}. \]  Therefore, $\mid  x_k \mid R^k  \leq M_{k-1}M$  for all
$k\geq  0$. In  particular, since  by  assumption $M\geq  1$, we  have
$M_k\leq  M_{k-1}M$  and   hence  $\frac{M_k}{M^k}$  is  a  decreasing
sequence, and hence a bounded sequence.  We may assume (increasing $M$
if necessary),  that $M_k\leq  M M^k$ for  all $k$.   Therefore, $\mid
x_k\mid r^k\leq \frac{MM^k}{R^k}r^k$. Therefore, if $\frac{Mr}{R} < 1$
then $\sum \mid x_k\mid r^k$  is dominated by the convergent geometric
series $M \sum (\frac{Mr}{R})^k$.  Hence the formal power series $\sum
x_kz^k$ converges in the smaller disc $\mid z \mid <\frac{R}{M}$. \\

We may similarly  solve the differential equation in  every small disc
inside the  unit disc $\Delta$ (  as a convergent  power series around
$z_0 \in \Delta$)  and by the uniqueness of the  power series - thanks
to the recursion (\ref{recursive}) -  the two power series coincide as
functions on the intersections of the smaller discs. Therefore, by the
principle of  analytic continuation,  there is a  holomorphic function
$y$  on all  of  the disc  which  is a  solution  of the  differential
equation $\frac{dy}{dz}+f_0(z)y =0$.  \\

Exactly the same proof shows that if now $y:\D \ra \C^n$ has values in
a  vector  space,  and $f_0$  is  replaced  by  a matrix  valued  {\it
holomorphic} function $A(z):\D \ra M_n(\C)$ (i.e.  an $M_n(\C)$-valued
convergent holomorphic function on $\D$)  then there is a power series
$y$  with coefficients  $x_k$ in  $\C^n$ which  is a  solution  of the
differential   equation   $\frac{dy}{dz}+A(z)(   y(z))=0$  and   which
converges on all of $\D$.  \\

Suppose  $y_1,y_2:   \D  \ra  M_n(\C   )$  are  two   solutions,  with
$y_1(0)=Id_n$  and $y_2(0)=x_0\in  GL_n(\C)$.  The  solution  $y_2$ is
uniquely determined by its  constant term $x_0\in M_n(\C)$, it follows
that $y_2$ is the product of  the matrix valued function $y_1$ and the
constant matrix $x_0$:
\[y_2=y_1x_0.\] This establishes that  every vector valued solution $y:
\D  \ra \C  ^n$  to  the equation  $\frac{dy}{dz}=A(z)y$  is a  linear
combination of the rows of the matrix $y_1$. \\

We now choose 
\[ A(z)=\begin{pmatrix} 0 & -1 & 0 & \cdots & 0 \\ 0 & 0 & -1 & \cdots
& 0 \\ \cdots &  \cdots & \cdots & \cdots & 0 \\ 0 &  0 & 0 & \cdots &
-1 \\ f_0(z) & f_1(z) & f_2(z) & \cdots & f_{n-1}(z)
\end{pmatrix} \quad  and \quad  y=\begin{pmatrix} y_1(z) \\  y_2(z) \\
\cdots \\  y_n(z) \end{pmatrix},  \] where $y$  is viewed as  a column
vector.  Then the  vector  valued equation  $\frac{dy}{dz}+A(z)y(z)=0$
yields  the  $n$ scalar  valued  equations  $y_ 1'(z)=y_2(z),  \cdots
y_{n-1}'(z)=y_n(z)$,                                                and
$y_n'(z)+f_0(z)y_1(z)+\cdots+f_{n-1}(z)y_n(z)=0$.    In  other  words,
$w=y_1(z)$ is the solution to the scalar valued differential equation
 \[\frac        {d^nw}{dz^n}+f_{n-1}(z)\frac{d^{n-1}w}{dz^{n-1}}+\cdots
f_1(z)\frac{dw}{dz}+f_0(z)w=0. \] This  proves Cauchy's theorem in all
cases.

\end{proof}

If $U$  is now taken to  be any connected  open set in $\C$,  then the
foregoing result  of Cauchy says  that at each  point $p$ of  the open
set, there are holomorphic functions  $y_1, \cdots, y_n$ defined in an
open neighborhood of $p$ which are fundamental solutions to the above
differential equation. If $\Gamma $  denotes a closed loop in the open
set $U$ starting  and ending at $p$, then  analytic continuation along
the path of  solutions is possible and when we  return to the original
point, we get new fundamental solutions $w_1, \cdots, w_n$. This means
that there is a matrix  $M=M(\gamma)$ depending on the path, such that
$w=My$ in a  neighborhood of $p$.  One can check  that the matrix $M$
depends only on  the homotopy class of the path  $\gamma$ based at $p$
and not on the path itself. \\

Moreover, if  $\gamma_1,\gamma _2$ are  {\it two} paths based  at $p$,
and $\gamma  $ is the composition  of these paths, then  one can check
that  $M(\gamma)=M(\gamma  _1)M(\gamma  _2)$.  Thus,  the  association
$\gamma  \ra   M(\gamma )$  yields  a  group   homomorphism  from  the
fundamental group of  the open set $U$ based  at $p$, into $GL_n(\C)$.
This homomorphism  is called the ``monodromy  representation'' and the
image is called the ``monodromy group''.

\subsection{Finiteness}

Let  $U \subset  \C$  be a  connected  open set.   Then  the space  of
holomorphic  functions   on  $U$  is   an  integral  domain   and  the
corresponding  field  of of  fractions,  i.e.   ratios of  holomorphic
functions, is  a field, called  the field of meromorphic  functions on
$U$.  Let  $U^* \ra U$  (given by $\tau  \mapsto z$) be  the universal
cover  $U^*$  of $U$  and  let  $\Gamma$  be the  deck  transformation
group. \\

We say that a function $f:U^*  \ra \C$ is {\bf algebraic} if it
satisfies   a   polynomial  relation 
$f^n   (\tau)   +\sum  _{i=0}^{n-1}   \phi _j (z)f^i(\tau)=0$  with  
coefficients  $\phi _j$ in  the  field  $K$ of  meromorphic
functions on $U$.

\begin{lemma} A function $f$ on  $U^*$ is algebraic if and only
if its orbit under the deck transformation group $\Gamma$ is finite.
\end{lemma}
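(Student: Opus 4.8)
The plan is to treat this as a Galois-theoretic statement. The essential preliminary observation is that the field $K$ of meromorphic functions on $U$ is exactly the field of $\Gamma$-invariant meromorphic functions on $U^*$: since $U^* \ra U$ is a covering, hence a local biholomorphism, and $U = U^*/\Gamma$, a $\Gamma$-invariant meromorphic function on $U^*$ descends to a meromorphic function on $U$, and conversely every meromorphic function on $U$ pulls back (via $\tau \mapsto z$) to such an invariant function. I will assume throughout that $f$ is meromorphic on $U^*$ (as are the solutions of the differential equations under consideration), so that each translate $f\circ \gamma$ with $\gamma \in \Gamma$ is again meromorphic; and I will use that, because $U^*$ is connected (being a universal cover of the connected set $U$), the meromorphic functions on $U^*$ form a field $L \supset K$.

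For the direction that a finite orbit implies algebraicity, let $f = f_1, f_2, \ldots, f_m$ be the distinct elements of the orbit $\Gamma \cdot f$ and set
\[ P(X) = \prod_{j=1}^{m}(X - f_j). \]
Each $\gamma \in \Gamma$ permutes the roots $f_j$, hence fixes every coefficient of $P$; thus the coefficients are $\Gamma$-invariant meromorphic functions on $U^*$, that is, elements of $K$. Since $P$ is monic and $P(f)=0$, the function $f$ is algebraic in the sense of the definition.

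For the converse, suppose $f$ satisfies a monic relation $f^n + \phi_{n-1}f^{n-1} + \cdots + \phi_0 = 0$ with all $\phi_i \in K$. Applying $\gamma \in \Gamma$ and using $\phi_i \circ \gamma = \phi_i$, I find that each translate $f\circ\gamma$ satisfies the \emph{same} equation; in other words every element of the orbit is a root of the single polynomial $X^n + \phi_{n-1}X^{n-1}+\cdots+\phi_0$ with coefficients in the field $L$. A polynomial of degree $n$ over a field has at most $n$ roots, so the orbit contains at most $n$ distinct functions and is therefore finite.

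The only genuinely non-formal point, and the step I would be most careful about, is the identification of $K$ with the $\Gamma$-invariant meromorphic functions on $U^*$ together with the accompanying descent statement: I need that a $\Gamma$-invariant meromorphic function on the cover really comes from a meromorphic function downstairs. This holds because $\tau \mapsto z$ is a local biholomorphism, so invariance lets one define the function on $U$ by reading it off on any local section, with holomorphy away from the discrete pole set and the pole behaviour transported isomorphically. Everything else is the standard ``elementary symmetric functions of a $\Gamma$-orbit are invariant'' argument in one direction and the ``a degree-$n$ polynomial over a field has at most $n$ roots'' argument in the other.
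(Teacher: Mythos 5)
Your proof is correct and follows essentially the same route as the paper's: one direction via the orbit polynomial $\prod(X-f_j)$ whose coefficients are $\Gamma$-invariant and hence descend to $K$, the other via the observation that every $\Gamma$-translate of $f$ satisfies the same polynomial relation, which has only finitely many roots. The only difference is that you make explicit the descent of $\Gamma$-invariant functions to $U$ and the hypothesis that $f$ be meromorphic, points the paper leaves implicit.
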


\begin{proof} The polynomial relation holds  if $f$ is replaced by any
translate  under an  element $\g  \in \G$.  But since  there  are only
finitely many  roots to any polynomial,  it follows that  the orbit of
$f$ under $\Gamma$ is finite. \\

On the  other hand, if  a function $f$  on $U^*$ is  invariant under
$\Gamma$,  then  it  defines   a  holomorphic  function  on  the  base
$U$.  Therefore, if  the orbit  under $\Gamma  $ is  finite,  then the
polynomial  $P(t)=  \prod  _{\g\in  \Gamma /\Gamma  _f}  (t-\g(f))$  has
coefficients in $K$. Hence $f$ is algebraic.
\end{proof}

\begin{corollary} Suppose
\[\frac{d^ny}{dz^n}+f_{n-1}(z)\frac{d^{n-1}y}{dz^{n-1}}+
\cdots+f_1\frac{dy}{dz}+f_0y=0\]  is   a  differential  equation  with
coefficients  $f_i$ holomorphic  on  $U$. Suppose  that the  monodromy
representation is irreducible. Then a nonzero solution to the equation
is algebraic if and only if the monodromy is finite.
\end{corollary}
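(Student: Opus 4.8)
The plan is to translate the statement entirely into the action of the deck transformation group $\Gamma$ on the $n$-dimensional space $V$ of solutions pulled back to the universal cover $U^*$, and then to combine the preceding Lemma with the irreducibility hypothesis. First I would record that, by Cauchy's theorem, the pullbacks to $U^*$ of the fundamental solutions span an $n$-dimensional $\C$-vector space $V$, and that $\Gamma$ acts on $V$ precisely through the monodromy representation: for $\g \in \Gamma$ the translate $\g \cdot y$ of a solution is again a solution, and this assignment is (by the definition given after Cauchy's theorem) the monodromy homomorphism $\rho \colon \Gamma \ra GL(V) \cong GL_n(\C)$. In particular, ``the monodromy is finite'' means exactly that the image $\rho(\Gamma)$ is a finite subgroup of $GL(V)$.

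For the easy implication, suppose the monodromy group $\rho(\Gamma)$ is finite. Then for any solution $y$ the orbit $\{\g \cdot y : \g \in \Gamma\}$ is the image of $y$ under a finite group of operators, hence is a finite set. By the Lemma, any function on $U^*$ with finite $\Gamma$-orbit is algebraic, so $y$ is algebraic. This direction uses neither irreducibility nor $y \neq 0$.

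For the converse, suppose $y$ is a nonzero algebraic solution. By the Lemma its $\Gamma$-orbit $\{y_1, \dots, y_m\}$ (with $y = y_1$) is finite; let $W \subset V$ be its $\C$-span. Since $\Gamma$ permutes the orbit, $W$ is a monodromy-invariant subspace, and $W \neq 0$ because $y \neq 0$. Irreducibility of the monodromy representation then forces $W = V$, so the finite set $\{y_1, \dots, y_m\}$ spans the entire solution space. Now the group $\rho(\Gamma)$ permutes this finite spanning set, which yields a homomorphism $\rho(\Gamma) \ra \mathrm{Sym}(\{y_1, \dots, y_m\})$ into a finite symmetric group. This homomorphism is injective, since an element of $GL(V)$ fixing each $y_i$ fixes a spanning set of $V$ and is therefore the identity. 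Hence $\rho(\Gamma)$ embeds into a finite group and is itself finite, as required.

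I expect the only point requiring genuine care to be the identification in the first step, namely that the deck-group action on solutions on $U^*$ is literally the monodromy representation already introduced; everything afterward is formal. The one essential use of the hypothesis is irreducibility, invoked exactly once to upgrade ``one solution has a finite orbit'' to ``a finite orbit spans all of $V$''. Without it, an algebraic solution could generate only a proper invariant subspace $W$ on which $\rho(\Gamma)$ acts through a finite quotient, while the full monodromy group on $V$ remains infinite; so irreducibility is precisely what prevents this degeneracy.
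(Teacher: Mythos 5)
Your proposal is correct and follows essentially the same route as the paper: the easy direction is the Lemma applied to a finite orbit, and the converse uses the Lemma to get a finite orbit whose span is a nonzero invariant subspace, forced by irreducibility to be all of $V$. The only cosmetic difference is that you finish by embedding the monodromy group into the symmetric group of the finite spanning orbit, while the paper extracts a basis from that orbit and notes each basis vector has finite orbit; both rest on the same fact that a linear map is determined by its values on a spanning set.
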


\begin{proof}  If $f$  is  a solution,  then  so is  $\g(f)$ for  $\g\in 
\G$. If the monodromy is finite,  it means that the orbit of $f$ under
$\Gamma $ is  finite, in particular, and hence it  is algebraic by the
lemma. \\

On the  other hand,  if some  solution $f$ is  algebraic, then  by the
lemma the  orbit is finite. It  means that the $\G$  translates of $f$
span a  subspace which  is $\Gamma $  stable and these  translates are
algebraic. By irreducibility, this is the whole space. This means that
for some basis of the space  of solutions, the orbit of $\G$ is finite
for every  element of the basis.  This means that the  image under the
monodromy representation of $\G$ is finite.
\end{proof}

\section{Punctured Disc}

Now consider the open set $U=\Delta ^*$, obtained by removing the point
$0$  from the  unit disc  $\Delta$. 

\begin{example}

Let us look at the differential equation
\[\frac{dy}{dz}-\frac{\alpha  }{z}y=0,  \] where  $\alpha  \in \C$  is
fixed.  Solving, we get  $y=z^{\alpha}$. This function is not ``single
valued''.  We view $z=e^{2\pi i \tau}=\phi (\tau)$ with $\tau \in \fh$
the upper half plane, with $\phi $ being a covering map.  Consider the
path $\omega  : [0,1] \ra  \fh$ starting at  $i$ and ending  at $i+1$.
Its composite $\gamma = \phi \circ \omega$ is a closed loop in $\Delta
^*$ based at  $p=e^{-2\pi}$; the effect of traversing  along this path
on the  solution $y$ is to  multiply it by $e^{2\pi  i \alpha}$.  Thus
$M(\gamma )$ is the $1\times 1$ matrix $e^{2\pi i \alpha}$.
\end{example}

\begin{example}
As another example, consider the equation 
\[\frac{d^2y}{dz^2}=- \frac{1}{z}\frac{dy}{dz}.\] Clearly the constant
function $y_1=1$  is a solution; it  is invariant under  the action of
the loop $\gamma$.

It  is easily  checked that  $y_2=\frac{1}{2\pi i}  log z$  is another
solution;   to  view   this   solution  as   a   function,  we   write
$y=\frac{1}{2\pi i} log (e^{2\pi i \tau})=\frac{1}{2\pi i} 2\pi i \tau
=\tau$;  hence the  action  of the  loop  $\gamma $  of the  preceding
example,  is to  take $y_2$  into  the new  solution 
$y_2+\frac{1}{2\pi i} 2\pi  i=y_2+y_1$. Hence
\[M(\gamma )=\begin{pmatrix} 1 & 1 \\ 0 & 1 \end{pmatrix}.\]  
\end{example}

\subsection{Regular Singular Points and a Theorem of Fuchs}

We now  look at a  more general case;  suppose we have  a differential
equation of the form
\begin{equation}                                \label{regularsingular}
\frac{d^ny}{dz^n}+f_{n-1}(z)\frac{d^{n-1}y}{dz^{n-1}}+           \cdots
+f_1(z)\frac{dy}{dz}+f_0(z)y,\end{equation} where each $f_i(z)$ has at
most a pole of order $n-i$ at $z=0$. Then the monodromy $M$ (i.e.  the
action of the  generator of $\pi _1(\D ^*)\simeq \Z$)  acts on the $\C
^n$  space of  solutions.   Write $\theta  =z\frac{d}{dz}$; using  the
relation  $\theta  ^2=z^2\frac{d^2}{dz^2}+\theta$  one  can  show,  by
induction, that
\[z^k\frac{d^k}{dz^k}= \th (\th-1)\cdots (\th-k+1) \] for every $k\geq
1$. Then  the differential  equation (after multiplying  throughout by
$z^n$), takes the form
\[\th  (\th -1)\cdots  (\th -n+1)y+  zf_{n-1}(\th (\th  -1)\cdots (\th
-n+1)y+\]
\[+ \cdots + z^{n-2}f_2 \th (\th -1)y + z^{n-1}f_1\th y+ z^nf_0y  =0.  \]
Rewriting this yields
\[\th  ^n y+ F_{n-1}(z)  \th ^{n-1}+F_{n-2}  (z) \th  ^{n-2}y+\cdots +
F_1(z)  \th   y+F_0(z)y=0  \]  where  now  the   functions  $F_i$  are
holomorphic  on  all of  the  disc  (including  the puncture).   Write
$a_i=F_i(0)$   and   $f(t)=t^n+a_{n-1}t^{n-1}+\cdots   +a_1t+a_0=\prod
_{j=1}^n  (t-\a  _j)$.   The  equation  $f(t)=0$ is  called  the  {\bf
indicial equation} and the roots  of the indicial equation (i.e. roots
of the polynomial $f$) are called the indicial roots.

\begin{theorem}  \label{Fuchs} (Fuchs)  With  the preceding  notation,
assume  that $0$  is  a  regular singular  point  of the  differential
equation (\ref{regularsingular}).  Then, the characteristic polynomial
of   the   monodromy  matrix   $M$  of   the  differential   equation
(\ref{regularsingular}) is the polynomial
\[\prod _{j=1}^n  (t-e^{2\pi i  \a_j}).\] Moreover, every  solution of
the   differential  equation   (\ref{regularsingular})  is   a  linear
combination of functions  of the form $\phi (z)z^{\a}  P(log z)$ where
$\phi  $ is  a holomorphic  function on  all of  the disc,  $\a$  is a
complex number and $P$ is a polynomial.
\end{theorem}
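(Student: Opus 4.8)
The plan is to convert the scalar equation into a first-order system in the Euler operator $\theta=z\,d/dz$, pass to the universal cover $z=e^{2\pi i\tau}$, extract moderate growth of the solutions by a Gronwall estimate, and then read off both the structural form and the characteristic polynomial from a logarithm of the monodromy matrix. Concretely, I would set $v=(y,\theta y,\dots,\theta^{n-1}y)^{t}$, so that the rewritten equation $\theta^n y+F_{n-1}(z)\theta^{n-1}y+\cdots+F_0(z)y=0$ becomes a first-order system $\theta v=C(z)v$, with $C(z)$ the companion matrix built from $-F_0(z),\dots,-F_{n-1}(z)$. The regular singular hypothesis is exactly what guarantees that the $F_i$, and hence $C$, are holomorphic on all of $\Delta$; moreover $C(0)=C_0$ is the companion matrix of the indicial polynomial $f$, so the eigenvalues of $C_0$ are precisely the indicial roots $\alpha_1,\dots,\alpha_n$. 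The substitution $z=e^{2\pi i\tau}$ (so $\theta=\tfrac{1}{2\pi i}\,d/d\tau$, with $\mathrm{Im}\,\tau\to+\infty$ corresponding to $z\to 0$) turns the system into $\tfrac{dv}{d\tau}=2\pi i\,C(e^{2\pi i\tau})\,v$, whose coefficient matrix tends to the constant $2\pi i\,C_0$ with an error of size $O(e^{-2\pi\,\mathrm{Im}\,\tau})$.

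The main obstacle is the next step: showing that every solution has moderate growth, i.e.\ $\|v(z)\|\le C\,|z|^{-c}$ as $z\to 0$ on the cover. I would obtain this from a Gronwall estimate applied to the $\tau$-system, whose coefficient matrix is bounded as $\mathrm{Im}\,\tau\to\infty$; this yields $\|v(\tau)\|\le C\,e^{c\,\mathrm{Im}\,\tau}$, which is exactly moderate growth in the variable $z$. Only this qualitative bound, and not the sharp asymptotics, is needed for the rest of the argument, which is why I would not try to diagonalize the limiting constant system precisely.

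With moderate growth in hand, let $M$ be the monodromy and choose a logarithm $A=\tfrac{1}{2\pi i}\log M$, so that $\exp(2\pi i A)=M$. For a fundamental matrix $Y(z)$ the product $z^{-A}Y(z)$ is single valued on $\Delta^*$, since continuation around the loop multiplies $Y$ on the left by $M$ and $z^{-A}$ on the right by $M^{-1}$. By moderate growth its entries are meromorphic at $0$, not merely holomorphic on the punctured disc, so $Y=z^{A}\Psi(z)$ with $\Psi$ holomorphic after absorbing integer powers of $z$ into $A$. Passing to a Jordan basis of $A$, each block of $z^{A}=\exp(A\log z)$ has the form $z^{\beta}\exp(N\log z)$ with $N$ nilpotent, and $\exp(N\log z)$ is polynomial in $\log z$; this is exactly the asserted form $\phi(z)\,z^{\beta}P(\log z)$ for the solutions.

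Finally I would identify the exponents and compute the characteristic polynomial. Inserting $Y=z^{A}\Psi$ into the system and comparing the lowest-order terms in $z$ shows that each exponent $\beta$ (an eigenvalue of $A$) satisfies $f(\beta)=0$, i.e.\ is an indicial root; equivalently, when no two indicial roots differ by a nonzero integer a holomorphic gauge transformation reduces the system to the constant one $\theta v=C_0 v$, whose fundamental matrix $z^{C_0}$ has monodromy exactly $\exp(2\pi i C_0)$. Hence the eigenvalues of $M=\exp(2\pi i A)$ are the numbers $e^{2\pi i\alpha_j}$ and the characteristic polynomial of $M$ is $\prod_{j=1}^n\bigl(t-e^{2\pi i\alpha_j}\bigr)$. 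In the resonant case, where indicial roots differ by integers, the monodromy may fail to be semisimple, as already seen in the example where $M=\begin{pmatrix}1&1\\0&1\end{pmatrix}$ arises from the double indicial root $0$; but the characteristic polynomial is unaffected by integer shifts of the exponents, and a count over the $n$ indicial roots closes the matching of multiplicities.
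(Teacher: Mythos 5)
Your route is genuinely different from the paper's. The paper proves the first-order version (Theorem \ref{matrixfuchs}) constructively: in the non-resonant case it builds a fundamental solution $Y(z)=X(z)z^{A_0}$ by solving the recursion $(k-\mathrm{ad}\,A_0)X_k=\sum_{j<k}A_{k-j}X_j$ and checking convergence, and it handles resonance by the shearing substitution $Y=\mathrm{diag}(zI_r,I_{n-r})W$, which lowers one eigenvalue of $A_0$ by $1$ without changing the monodromy or the multiset $\{e^{2\pi i\alpha_j}\}$. You instead derive the normal form abstractly: moderate growth via Gronwall on the cover, single-valuedness of $z^{-A}Y$ (which holds in either monodromy convention because $z^{\pm A}$ commutes with $M=e^{2\pi iA}$), meromorphy at $0$, hence $Y=z^{A}\Psi$ with $\Psi$ meromorphic. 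This part is correct and, for the structural claim that every solution is a combination of $\phi(z)z^{\alpha}P(\log z)$, arguably cleaner: no convergence estimates for a formal series are needed, only the Jordan decomposition of $A$. Your identification of $C(0)$ with the companion matrix of the indicial polynomial is also correct.

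The gap is in the characteristic-polynomial claim in the resonant case. Writing $Y=\Psi z^{A}$ and comparing lowest-order coefficients in $\theta\Psi+\Psi A=C(z)\Psi$ gives $\Psi_0A=C_0\Psi_0$; if $\Psi_0$ is invertible you are done, but when indicial roots differ by integers $\Psi_0$ can be singular, and then the intertwining relations only show that each eigenvalue $\beta$ of $A$ satisfies $f(\beta+k)=0$ for some nonnegative integer $k$. This pins the eigenvalues of $M$ down to the \emph{set} $\{e^{2\pi i\alpha_j}\}$ but not their multiplicities: with indicial roots $0$ and $1/2$, say, nothing so far excludes $M$ having eigenvalue $1$ with multiplicity $2$ and eigenvalue $-1$ not at all. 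Your closing sentence, ``a count over the $n$ indicial roots closes the matching of multiplicities,'' asserts exactly the missing step without supplying it, and a determinant/Wronskian argument only controls the product of the eigenvalues. You need an additional device here; the paper's shearing transformation is the standard one, since it reduces the resonant case to the non-resonant case (which you do handle) while preserving both the monodromy and each exponential $e^{2\pi i\alpha_j}$ with multiplicity.
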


Theorem  \ref{Fuchs} will be  recast in  terms of  matrix valued
solutions and  the differential equation  (\ref{regularsingular}) will
be rewritten as a {\it first order} equation.

\subsection{First order Matrix Valued Differential Equations}

Suppose now that $A:\Delta \ra M_n(\C)$ is a holomorphic map on all of
the  disc. Then  $A(z)$ is  represented by  a convergent  power series
$A(z)=\sum A_kz^k$ where $A_k\in M_n(\C)$. We look for local solutions
$Y:   \Delta   ^*  \ra   M_n(\C)$   to   the   first  order   equation
$z\frac{dY}{dz}=A(z)Y(z)$. \\

\begin{notation} \label{matrixexponents}  If $T\in M_n(\C)$  and $z\in
\Delta  ^*$  we  write  $z^T$   for  the  matrix  represented  by  the
(convergent)  exponential power  series in  the matrix  variable $(log
z)T$:
\[z^T=exp ((log z)T)=\sum _{k=0}^{\infty} \frac{(log z)^kT^k}{k!}\]
\end{notation}

We list some properties of the matrix exponent. 

[1] If  $A$ and $B$  are commuting square  matrices of size  $n$, then
$z^{A+B}=z^Az^B$.

[2] If $A\in M_n(\C)$ and $g \in GL_n(\C)$, then $z^{gAg^{-1}}= gz^Ag^{-1}$.  

[3] If $N\in M_n(\C)$ is nilpotent, then $z^N$ is a polynomial in $log z$. 

[4] If $A \in M_n(\C)$ is a diagonal matrix whose diagonal entries are
$a_1,a_2,  \cdots,a_n$ then  $z^A$  is also  a  diagonal matrix  whose
diagonal entries are $z^{a_1}, z^{a_2}, \cdots, z^{a_n}$.

[5] These properties imply that  if $A\in M_n(\C)$ is any matrix, then
the entries of  the matrix $z^A$ are linear  combinations of functions
of the  form $z^{\a} P(log z)$ where  $P$ is a polynomial  and $\a \in
\C$ is a fixed complex number.

[6] The derivative of $z^A$ satisfies:  $z\frac{dz^A}{dz}= z^AA$. 

[7] The monodromy operator on the multivalued function $z^A$ is simply
$e^{2\pi i A}$, since $z^A=e^{2\pi i \tau A}$ and the generator of the
Deck transformation group takes $\tau $ to $\tau +1$.

For a reference to the following see \cite{Cod-Lev}, Theorem (4.1) and
Theorem (4.2). 

\begin{theorem}  \label{matrixfuchs} (Fuchs)  Suppose  $A: \Delta  \ra
M_n(\C)$ is  a holomorphic  function. Let $\fh  \ra \Delta ^*$  be the
exponential covering map as before. Consider the differential equation
in $Y(z)=Y^*(\tau )\in M_n(\C)$:
\begin{equation}                                     \label{firstorder}
\frac{dY}{dz}=\frac{A(z)}{z}Y.\end{equation}  The   monodromy  of  the
equation acts on the space of solutions $Y^*$ by the formula $Y^*(\tau
+1)=Y^* (\tau  )M$ where  $M\in GL_n(\C)$.  Moreover,  the semi-simple
part  of the matrix  $M$ is  conjugate to  the exponential  $e^{2\pi i
A_s}$  of $A_s$, where  $A_s$ is  the semi-simple  part of  the matrix
$A_0= A(0)$.
\end{theorem}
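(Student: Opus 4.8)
The plan is to produce an explicit \emph{normal form} for a fundamental solution and read the monodromy off from it. First I would invoke the matrix version of Cauchy's theorem established inside the proof of Theorem~\ref{cauchy}: on the simply connected cover $\fh$ there is a holomorphic fundamental solution $Y^*:\fh\ra GL_n(\C)$. Since $\tau$ and $\tau+1$ have the same image $z$ under the covering, $Y^*(\tau+1)$ again solves~(\ref{firstorder}); and because any two fundamental solutions differ by right multiplication by a constant invertible matrix (the identity $y_2=y_1x_0$ from the proof of Theorem~\ref{cauchy}), there is $M\in GL_n(\C)$ with $Y^*(\tau+1)=Y^*(\tau)M$. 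This gives the first assertion.

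For the second assertion I would look for a fundamental solution of the special shape $Y(z)=P(z)z^{B}$, with $P$ holomorphic and invertible near $0$ and $B$ a constant matrix; by property~[7] such a solution has monodromy $M=e^{2\pi i B}$, so the whole problem reduces to controlling the eigenvalues of $B$. Substituting into~(\ref{firstorder}) and using property~[6], namely $z\,\frac{d}{dz}z^{B}=z^{B}B$, turns the equation into $zP'(z)=A(z)P(z)-P(z)B$. Expanding $A=\sum A_kz^k$ and $P=\sum P_kz^k$ and matching coefficients gives $A_0P_0=P_0B$ at order $0$, so $B=P_0^{-1}A_0P_0$ is conjugate to $A_0$, and at order $k\ge 1$ the Sylvester equation $kP_k-A_0P_k+P_kB=\sum_{i=1}^{k}A_iP_{k-i}$. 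The operator $X\mapsto kX-A_0X+XB$ is invertible precisely when $k$ is not a difference of two eigenvalues of $A_0$; so when no two eigenvalues of $A_0$ differ by a positive integer, every $P_k$ is uniquely determined, the inverse operators have norm $O(1/k)$, and a dominant-series estimate exactly like the one in the proof of Theorem~\ref{cauchy} shows that $P(z)$ converges near $0$. In this non-resonant case $M=e^{2\pi iB}$ is conjugate to $e^{2\pi iA_0}$, whose semisimple part is $e^{2\pi iA_s}$, as required.

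The hard part is the resonant case, when two eigenvalues of $A_0$ differ by a nonzero integer and some Sylvester operator is singular. Here I would first conjugate $A_0$ to block form grouping eigenvalues by their class modulo $\Z$, and then apply a \emph{shearing} transformation $Y=z^{D}\widetilde Y$ with $D$ a constant \emph{integer} (block) diagonal matrix chosen to shift the eigenvalues of the leading coefficient into a single fundamental domain for $\C/\Z$, removing all resonances and reducing to the case already treated. The point to verify is that this does not disturb the monodromy: since $D$ is integral, $e^{2\pi iD}=\mathrm{Id}$, so $z^{D}$ is genuinely single valued on $\Delta^*$ and $Y$ and $\widetilde Y$ have the \emph{same} monodromy matrix. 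The eigenvalues of the resulting exponent differ from those of $A_0$ only by integers, so the monodromy eigenvalues $e^{2\pi i\b_j}$ coincide with the $e^{2\pi i\l_j}$, and, tracking multiplicities coset by coset in $\C/\Z$, they agree with those of $e^{2\pi iA_s}$ with the correct multiplicities. I expect the main obstacle to be precisely this bookkeeping---arranging the shearing so that the regular-singular form is preserved and the eigenvalue multiplicities are matched class by class---rather than any single hard estimate; note that only the semisimple part is claimed, which is exactly what survives the integer ambiguity introduced by the shearing.
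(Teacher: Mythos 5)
Your proposal is correct and follows essentially the same route as the paper: in the non-resonant case seek $Y=X(z)z^{A_0}$, solve the recursion $(k-\mathrm{ad}\,A_0)X_k=\sum_{j<k} A_{k-j}X_j$ with a majorant estimate as in Theorem~\ref{cauchy}, and then remove resonances by a shearing transformation that is single-valued on $\Delta^*$ and hence leaves the monodromy unchanged. The one point where you are slightly more optimistic than the paper is the shearing step: the paper multiplies by $\mathrm{diag}(zI_r,I_{n-r})$ one unit shift at a time, which preserves the regular-singular form automatically because the relevant off-diagonal block of $A(z)$ vanishes to order one at $z=0$ (as $A_0$ is block diagonal), whereas your single integer block-diagonal shear $z^{D}$ would in general require higher-order vanishing of off-diagonal blocks; the caveat you yourself raise about preserving the regular-singular form is resolved precisely by iterating unit shifts.
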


\begin{proof} First  assume that if $\lambda, \mu$  are {\it distinct}
eigenvalues  of  the matrix  $A_0$,  then they  do  not  differ by  an
integer. This  means that no eigenvalue of  the adjoint transformation
$ad  A_0$ can be  a nonzero  integer.  We  then show  that there  is a
holomorphic function $X:\Delta \ra GL_n(\C)$ such that
\[Y(z)=X(z)z^{A_0}\]  is  a  solution  of the  differential  equation
(\ref{firstorder}).  Write $X(z)=\sum _{k=0}^{\infty}  X_kz^k$, and
solve  for   the  coefficients   $X_k$.  Write,  as   before,  $\theta
=z\frac{d}{dz}$.  Then the  differential equation  for $Y$  is $\theta
Y(z)= A(z)Y(z)$; moreover, by  the formula for the differentiation for
a product, we get
\[\theta Y(z)=\theta (X(z)z^{A_0})= \theta (X(z)) z^{A_0}+ X(z)z^{A_0}
A_0=\] \[= A(z)Y(z)=A(z)X(z)z^{A_0}.\] We now cancel $z^{A_0}$ on both
sides of the preceding equation and obtain
\[\theta (X(z))+X(z)A_0= A(z)X(z), \]  where now $A$ is holomorphic on
all of  the disc and $X$  is assumed to  be holomorphic on all  of the
disc.  Writing the  power series  for $X$  and $A$,  we then  get, for
$k\geq 1$, the recursion
\[ kX_k+ X_kA_0=A_0X_k+\sum _{j=0}^{k-1} A_{k-j}X_j, \] and for $k=0$,
the equation $X_0A_0=A_0X_0$.  We can solve for $X_0$  by taking $X_0$
to be identity. The recursion for the coefficients is
\[(k-adA_0)X_k=\sum _{j=0}^{k-1}A_{k-j}X_j.\]  This can be  solved for
all $k\geq 1$ since, by  assumption, {\it non-zero integers $k$ cannot
be eigenvalues  of the operator  $adA_0$}; therefore, $k-adA_0$  is an
invertible operator and hence $X_k$ may be written as a combination of
the $X_j: j\leq k-1$. \\

[ We now check that the formal power series $\sum X_kz^k$ converges in
a  small   enough  neighborhood   of  $0$.   Consider   the  sequence
$1-\frac{adA_0}{k}$ for  $k\geq 1$. For  $k$ large enough,  the $k$-Th
term of this sequence is  close to the identity matrix; by assumption,
all the  terms of this  sequence are non-singular. Hence  the sequence
$(1-\frac{adA_0}{k})^{-1}$ is  bounded from above by  a constant $M>1$
say. Since  the sequence $A_{k-j}R^{k-j}$  ($k\geq j$) is  bounded, we
may assume that  $\mid A_{k-j}\mid R^{k-j}\leq M$ for  all $k,j$. Let,
as in  the proof  of Cauchy's  theorem, $M_k$ be  the supremum  of the
matrix norms $\mid X_j\mid R^j$  for $j\leq k$. The recursive relation
for the $X_k$ now implies that
\[k  \mid X_k\mid  \leq  M  Mk_{k-1} .  \]  Therefore, $\mid  X_k\mid
R^k\leq M^2M_{k-1}$. Since  $M\geq 1$, we also have  $\mid X_j\mid R^j
\leq M^2M_{k-1}$  for all $j\leq k-1$. Hence  $M_k\leq M^2M_{k-1}$ and
therefore, $M_km^{-2k}$  is a decreasing  sequence and is  bounded. We
may assume then that $\mid X_k\mid R^k\leq M_k\leq M M^{2k}$ and hence
$\sum X_kz^k$ converges if $\mid z \mid < \frac{R}{M^2}$.]  \\

Thus  the  monodromy  action  on $Y(z)=X(z)z^{A_0}$  is  simply  right
multiplication  by the exponential  matrix $e^{2\pi  i A_0}$  of $A_0$
since the solution  $X(z)$ is holomorphic also at  the puncture and is
invariant under the  monodromy action. This proves the  Theorem in the
case when  distinct eigenvalues of  $A_0$ remain distinct  modulo $1$.
\\

The proof  of the general case of  the Theorem can be  reduced to this
case.   Fix an  eigenvalue $\l$  of the  linear  transformation $A_0$.
Write $\C ^n= E\oplus F$  where $E$ is the generalized $\l$ airspace
for $A_0$, and $F$ an $A_0$ stable supplement to $E$. If $\e_1,\cdots,
\e_r$ is a  basis of $E$, and $\e_{r+1},\cdots, \e_n$  a basis of $F$,
then with  respect to  the basis $\e_1,  \cdots, \e_n$ of  $\C^n$, the
matrix of  the transformation which is  $z$ times the  identity on $E$
and  identity on $F$  is given  by $\begin{pmatrix}  zI_r &  0 \\  0 &
I_{n-r}\end{pmatrix}$, where $I_k$ is the identity matrix of size $k$.
Moreover,  $A_0=\begin{pmatrix}   \l  I_r   +N_r  &  0   \\  0   &  \d
_0\end{pmatrix}$ where $\d_0$ acts on $F$ and $N_r$ is a {\it nilpotent}
matrix of  size $r$.   Write $Y(z)=  \begin{pmatrix} zI_r &  0 \\  0 &
I_{n-r}\end{pmatrix}  W(z)=M(z)W(z)$.   Then it  is  easily seen  that
$W(z)$ satisfies the equation
\[ \theta  W(z)= B(z)W(z)\] where  $B(z)$ is holomorphic on  $\Delta $
and  $B_0=B(0)=  \begin{pmatrix}  (\l -1)I_r  +N_r  &  \b  _0  \\ 0  &  \d
_0\end{pmatrix}$.   Thus the  semi-simple part  of the  exponential of
$B_0$ is  conjugate to that of  $A_0$. Moreover, the  monodromy of $W$
and of $Y$  are the same. Consequently, $Y$ may be  replaced by $W$ in
the statement of the theorem without altering the conclusion. \\

We now apply  the preceding repeatedly to ensure that  if $\l$ and $\l
'$ are two  distinct eigenvalues of $A_0$ which differ  by an integer,
the  $A_0$ is  replaced by  $B_0$ such  that these  eigenvalues become
equal.   That is,  suppose $\lambda  =\lambda '+m$  for some  positive
integer  $m$ say.  As above,  we replace  $\lambda $  by $\lambda  -1$
without altering the monodromy; we do  this $m$ times until $\lambda $
is replaced by $\lambda '$, with monodromy unchanged. \\

Applying this procedure repeatedly to all eigenvalues which differ by
an integer,  we can thus ensure  that all the distinct  eigenvalues of
$A_0$ remain  distinct modulo  $1$. Then  we are  in the  special case
where $adA_0$ does not have  non-zero integers as eigenvalues. In that
case, the Theorem has already been proved.

\end{proof}

\subsection{Complex Reflections}

For a reference to the following, see Theorem 3.1.2 of \cite{Beu}. 

\begin{theorem} \label{complexreflection} {\rm (}Pochhammer{\rm )}
If as before, we have a differential equation 
\[     \frac{d^n    y}{dz^n}     +     \sum    _{i=0}^{n-1}     f_i(z)
\frac{d^iy}{dz^i}=0,  \] on  the punctured  disc $\Delta  ^*$,  and we
assume that the  functions $f_i$ have at most a  simple pole at $z=0$,
then  there are $n-1$  solutions which  extend holomorphic ally  to the
puncture, and  one solution which (possibly) has  singularities at the
puncture. Moreover, the monodromy matrix is of the form
\[M=\begin{pmatrix} 1 & 0 &  0 & \cdots & * \\ 0 & 1  & 0 & \cdots & *
\\ \cdots & \cdots & \cdots \\0 &  0 & 0 & \cdots & c \end{pmatrix} \]
for  some  $c\neq  0$.  
\end{theorem}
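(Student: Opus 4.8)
The plan is to read off the local behaviour at the puncture from the indicial equation and then to produce the $n-1$ holomorphic solutions by a direct power-series computation. First I would put the equation in the $\th=z\frac{d}{dz}$ normal form used for Theorem \ref{Fuchs}: multiplying through by $z^n$ and using $z^k\frac{d^k}{dz^k}=\th(\th-1)\cdots(\th-k+1)$ turns the $i$-th term into $z^{n-i}f_i(z)\,\th(\th-1)\cdots(\th-i+1)y$. The hypothesis that each $f_i$ has at most a simple pole means $z^{n-i}f_i(z)$ is holomorphic and vanishes to order $\geq n-i-1$ at $0$; hence the only term contributing a nonzero constant to the indicial polynomial, besides the leading $\th(\th-1)\cdots(\th-n+1)$, is the one with $i=n-1$, contributing $r=\mathrm{Res}_{z=0}f_{n-1}$. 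Thus the indicial polynomial factors as
\[ t(t-1)\cdots(t-n+2)\,(t-\a), \qquad \a=(n-1)-r, \]
so the indicial roots are $0,1,\dots,n-2$ together with $\a$. By Theorem \ref{Fuchs} the characteristic polynomial of the monodromy $M$ is therefore $(t-1)^{n-1}(t-e^{2\pi i\a})$; in particular $1$ occurs with multiplicity $n-1$ and the remaining eigenvalue is $c=e^{2\pi i\a}\neq 0$.

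Next I would construct the holomorphic solutions directly. Writing $y=\sum_{k\geq 0}c_kz^k$ and substituting, the coefficient of $z^N$ gives a recursion of the shape $f(N)\,c_N=(\text{a fixed linear form in }c_0,\dots,c_{N-1})$, where $f$ is the indicial polynomial above. The key computation---and the precise reason the simple-pole hypothesis is what is needed---is that for $N\leq n-2$ the right-hand side vanishes identically: a term $c_k$ with $k<N$ can enter only through $z^{n-i}f_i$, which forces $N-k\geq n-i-1$, while $\th(\th-1)\cdots(\th-i+1)$ kills $c_k$ unless $i\leq k$, and these two inequalities are incompatible when $N\leq n-2$. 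Hence $c_0,\dots,c_{n-2}$ may be prescribed freely and independently, and for $N\geq n-1$ the coefficient $c_N$ is determined, since $f(N)\neq 0$ there except at the one exceptional index discussed below. Convergence of the resulting series near $0$ follows from the same majorant estimates as in the proofs of Theorems \ref{cauchy} and \ref{matrixfuchs}. This produces at least an $(n-1)$-dimensional space of solutions holomorphic at the puncture, linearly independent because their initial Taylor coefficients $c_0,\dots,c_{n-2}$ are independent.

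The one delicate point---the main obstacle---is the case where $\a$ is an integer $\geq n-1$, for which $f(\a)=0$ and the recursion at $N=\a$ need not be solvable, the usual signal that a logarithmic solution may appear. I would handle it by bookkeeping of parameters: the failure at $N=\a$ imposes at most one linear constraint on the free data $(c_0,\dots,c_{n-2})$, but whenever it is a genuine obstruction the coefficient $c_\a$ itself becomes a new free parameter, so that the dimension of the holomorphic solution space stays $\geq n-1$ in every case. (Here $c=e^{2\pi i\a}=1$, so no information about the eigenvalues is lost.) Thus in all cases one may choose an $(n-1)$-dimensional space $H$ of solutions holomorphic at $0$, and since holomorphic solutions are single-valued they are fixed by the monodromy, i.e.\ $M$ restricts to the identity on $H$.

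Finally I would assemble the matrix. Choosing a basis $y_1,\dots,y_{n-1}$ of $H$ and extending by one further solution $y_n$ (the one that may be singular), the relations $My_j=y_j$ for $j<n$ give the first $n-1$ columns $e_1,\dots,e_{n-1}$, while $My_n=\sum_j \ast\,y_j+c\,y_n$ gives the last column $(\ast,\dots,\ast,c)^{\mathrm t}$. The matrix is then upper triangular with diagonal $1,\dots,1,c$, so its eigenvalues are $1$ (with multiplicity $n-1$) and $c$; matching with the characteristic polynomial from Theorem \ref{Fuchs} identifies $c=e^{2\pi i\a}$, which is nonzero (equivalently $c=\det M\neq 0$ since $M\in GL_n(\C)$). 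This is exactly the asserted form of $M$.
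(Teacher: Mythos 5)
Your argument is correct, and at its core it is the same strategy as the paper's --- a Frobenius-type power-series recursion yielding an $(n-1)$-dimensional space of solutions holomorphic at $0$, which the monodromy must fix, whence the block form of $M$ and $c=\det M\neq 0$ --- but the decomposition you use is genuinely different. The paper passes to the first-order companion system $\frac{dy}{dz}=A(z)y$ with $y=(y,y',\dots,y^{(n-1)})^{\mathrm{t}}$, notes that the residue matrix $A_{-1}$ has only its last row nonzero (so its eigenvalues are $0$ with multiplicity $n-1$ and the single exceptional value $d=-\mathrm{Res}_{z=0}f_{n-1}$), and runs the vector recursion $(k-A_{-1})x_k=\cdots$: the equation $A_{-1}x_0=0$ at $k=0$ supplies the codimension-one space of free initial data, and the only index where the recursion can jam is $k=d$. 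You instead stay scalar, in $\th$-form, and factor the indicial polynomial as $t(t-1)\cdots(t-n+2)(t-\a)$ with $\a=n-1-r$; your free parameters are $c_0,\dots,c_{n-2}$ (your observation that the right-hand side of the recursion vanishes identically for $N\le n-2$ is the step doing the work of the paper's rank-one residue matrix), and your exceptional index sits at $N=\a$ rather than at $k=-r$ --- the two differ because the exponents of the naive companion system are not the indicial roots of the scalar equation, though both treatments of the resonant integer case are the same dimension count $(n-2)+1=n-1$. What your route buys is information the paper's proof does not extract: combining the factored indicial polynomial with Theorem \ref{Fuchs} identifies the full characteristic polynomial $(t-1)^{n-1}\bigl(t-e^{2\pi i\a}\bigr)$ of $M$ and hence the explicit value $c=e^{2\pi i\a}$ of the exceptional eigenvalue.
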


The number $c$ is called the {\it exceptional eigenvalue} (it can even
be 1) and  the matrix $M$ is called a {\it  complex reflection} (it is
identity on a co dimension one subspace of $\C ^n$). \\

\begin{proof}  By   the  same
procedure as  before, the  differential equation of  order $n$  can be
converted to a differential equation of order $1$ but with
solutions in the vector space $\C ^n$:
\[\frac{dy}{dz}= A(z)y(z)=\begin{pmatrix} 0 & 1 &  0 & \cdots & 0 \\ 0
& 0 & 1 & \cdots & 0 \\ \cdots & \cdots & \cdots & \cdots \\ -f_0(z) &
-f_1(z) & -f_2(z) & \cdots & -f_{n-1}(z) \end{pmatrix} (y(z)) .\]

We write  the vector valued  formal power series  expansion $y(z)=\sum
x_kz^k$,     with     $x_k$     in     $\C^n$.     If     we     write
$A(z)=\frac{A_{-1}}{z}+A_0+A_1z+\cdots   +   A_kz^k+\cdots   $,   then
$A(z)-\frac{A_{-1}}{z}$ is  a convergent power series  (with values in
$M_n(\C)$) in $\mid z \mid <1$.  Solving term by term, for each $k\geq
0$ comparing the coefficient of $z^k$ we get (cf \ref{recursive})
\begin{equation}                                          \label{recur}
kx_k=A_{-1}x_k+A_0x_{k-1}+A_1x_{k-2}+\cdots  +A_kx_0 .  \end{equation}
Now the first $n-1$ rows of the matrix $A_{-1}$ are all zero. the only
other eigenvalue $d$ of $A_{-1}$  is the residue of $-f_{n-1}$ at $0$.
If $d$ is never a  positive integer, then $A_{-1}-k$ is invertible for
{\it all}  positive integers $k$. Hence the above  recursion shows
that  all the  $x_k;k\geq 1$  are  uniquely determined  by $x_0$;  the
equation   for  $k=0$   shows  that   $x_0$  satisfies   the  equation
$A_{-1}x_0=0$.   This is  a co-dimension one  subspace of  $\C^n$ and
hence the space of  holomorphic solutions of the differential equation
is of dimension at least $n-1$. This proves the first part. \\

If a solution is holomorphic,  then analytic continuation along a loop
around  $0$ does  not  change  the function  and  hence the  monodromy
element acts trivially. This proves the second part of the Theorem.\\

Slightly more  work is  needed when the  eigenvalue $d$ is  a positive
integer. The equation  (\ref{recur}) may be applied to  all $k\neq d$;
in particular, if $A_{-1}x_0$ is zero, then $x_1, \cdots, x_{d-1}$ are
uniquely  determined. However, the  equation (\ref{recur})  applied to
$k=d$ shows that there exists  a linear transformation $B_d$ on $\C^n$
such that for each $x_0\in Keri (A_{-1})$ we have
\[(d-A_{-1})(x_d)=  B_d(x_0)  .\]  The  image $W$  of  $d-A_{-1}$  has
dimension $n-1$  and hence if $B_d(x_0)$  lie in this  image $W$, then
the  recursion (\ref{recur})  still applies  to locate  an  $x_d$. The
other $x_j  (j\geq k+1)$ are now uniquely  determined by (\ref{recur})
and  hence the space  of solutions  which are  holomorphic at  $0$ has
dimension at least $n-2$: this is  the dimension of the space of $x_0$
satisfying $A_{-1}x_0=0$ and $B_k(x_0)\in W=Image(d-A_{-1})$. \\

Now   we  take   $x_0=0$.   Then  by   (\ref{recur}),  $x_1=\cdots   =
x_{d-1}=0$.  Moreover,  $x_d$  satisfies $(A_1-d)(x_d)=0$.  Since  the
kernel of $A_1-d$ has dimension one, there does exist a non-zero $x_d$
with this property. Then by (\ref{recur}), all $x_j$  with $j\geq d+1$
are determined  and hence there  exists an extra  holomorphic solution
$w$  of the  form  $w(z)=z^DC_d+z^{d+1}x_{d+1}+\cdots+x_kz^k+\cdots $
. Hence  the space of holomorphic  solutions is again  of dimension at
least  $n-1$.  This proves  the  first part  when  $d$  is a  positive
integer.  The statement about the monodromy matrix follows as before.
\end{proof}

\subsection{The Plane with Two Punctures}

Consider  the   twice  punctured  plane   ${\mathcal  U}=\C  \setminus
\{0,1\}$, and a differential equation

\[\frac{d^n      y}{dz^n}     +      \sum      _{i=0}^{n-1}     f_i(z)
\frac{d^iy}{dz^i}=0, \]  where $f_i:\C  \setminus \{0,1\} \ra  \C$ are
holomorphic. Now  the fundamental  group of $\mathcal  U$ is  the free
group $F_2$  on two generators  $h_0,h_1$, given by small  loops going
counterclockwise  once  around $0$  and  $1$  respectively.  Thus  the
monodromy   representation   of  the   differential   equation  is   a
homomorphism from $F_2$ into $GL_n(\C)$; it is completely described by
specifying what the images of  $h_0$ and $h_1$ are. Thus the monodromy
is described by giving two matrices in $GL_n(\C)$. \\

The open set ${\mathcal U}=\C \setminus \{0,1\}$ may also be viewed as
${\mathbb P}^1\setminus  \{0,1,\infty\}$. Thus, the  fundamental group
of $\mathcal U$ can also be thought  of as the free group on the small
loops  $h_{\infty}, h_0,h_1$  going  around $\infty,  0,1$ modulo  the
relation $h_{\infty}h_1h_0=1$. Denote, by $A$ the image of
$h_{\infty}$ and by $B^{-1}$ that of $h_0$. Then $C=A^{-1}B$. \\

Since the universal cover of  $\C \setminus \{0,1\}$ is the upper half
plane, it  follows that the  solutions to the foregoing  equations are
functions on  the upper half plane  and that the  fundamental group of
$\mathcal U$ is the deck transformation group.

\newpage 

\section{The Hypergeometric Differential Equation} 
\label{hypergeometric}

Suppose  that  ${\mathcal  U}={\mathbb P}^1\setminus  \{0,1,\infty\}$.
Put $\theta  =z\frac{d}{dz}$ and let $\a  _1, \cdots, \a  _n$, and $\b
_1, \cdots, \b _n$ be complex numbers. Write
\begin{equation} \label{hypergeom} D=(\theta +\b _1-1)\cdots (\theta
+\b _n-1) -z(\theta +\a _1)\cdots (\theta +\a _n). \end{equation} This
is a  differential operator  on $\mathcal U$.  The equation  $Dy=0$ is
called the ``hypergeometric  differential equation'' and the solutions
are called  ``hypergeometric functions''.  These are  functions on the
upper half plane.

\begin{theorem}  \label{hypergeometricmonodromy}  Under the  monodromy
representation considered  in the preceding  subsection, the monodromy
of  the generator  $h_0$ around  the puncture  $0$  has characteristic
polynomial $\prod (t-e^{2\pi i(1-\b _j)})$ and the monodromy action of
$h_{\infty}$  has  characteristic polynomial  $\prod  (t-e^{2\pi i  \a
_j})$. Moreover, the element $h_1$ acts by a complex reflection.
\end{theorem}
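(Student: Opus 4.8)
The plan is to treat each of the three punctures $0$, $1$, $\infty$ separately, exploiting the structural form of the operator. Writing $P(\theta)=\prod_{j=1}^n(\theta+\b_j-1)$ and $Q(\theta)=\prod_{j=1}^n(\theta+\a_j)$, we have $D=P(\theta)-zQ(\theta)$, where $P$ and $Q$ are both monic of degree $n$ in $\theta$. For the punctures $0$ and $\infty$ I would read off the indicial roots and invoke Fuchs' theorem (Theorem \ref{Fuchs}); for the puncture $1$ I would verify the hypothesis of Pochhammer's theorem (Theorem \ref{complexreflection}).

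For the puncture at $0$: applying $D$ to $z^\rho$ gives $Dz^\rho=P(\rho)z^\rho-Q(\rho)z^{\rho+1}$, so the lowest-order term is governed by $P(\rho)=0$. Equivalently, the coefficient of $\theta^n$ in $D$ is $1-z$, which is non-vanishing at $z=0$; dividing by it puts the equation in the normalized form $\theta^n y+F_{n-1}(z)\theta^{n-1}y+\cdots+F_0(z)y=0$ of the Fuchs setup, with the $F_i$ holomorphic at $0$ and indicial polynomial $P(t)=\prod_{j=1}^n(t+\b_j-1)$. Its roots are $1-\b_j$, so Theorem \ref{Fuchs} yields the characteristic polynomial $\prod_{j=1}^n(t-e^{2\pi i(1-\b_j)})$ for $h_0$.

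For the puncture at $\infty$ I would pass to the coordinate $w=1/z$, under which $\theta=z\frac{d}{dz}$ becomes $-w\frac{d}{dw}$. Substituting and multiplying by $-w$ to clear the pole converts $D$ into $\prod_{j=1}^n\big(-w\frac{d}{dw}+\a_j\big)-w\prod_{j=1}^n\big(-w\frac{d}{dw}+\b_j-1\big)$, which is again of Fuchsian type at $w=0$, with leading coefficient a non-zero multiple of $(1-w)$ and, after normalizing, indicial polynomial $\prod_{j=1}^n(t-\a_j)$. Thus the local exponents at infinity are the $\a_j$, and since $h_\infty$ is the positively oriented loop around $w=0$ in the chart at infinity, Theorem \ref{Fuchs} gives the characteristic polynomial $\prod_{j=1}^n(t-e^{2\pi i\a_j})$. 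The one point to watch here is the orientation convention: the identification of $h_\infty$ with the counterclockwise loop in the $w$-chart is what produces $e^{2\pi i\a_j}$ rather than its inverse.

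For the puncture at $1$, the key observation is that because $P$ and $Q$ are both monic of degree $n$, the coefficient of $\theta^n$ in $D$ is exactly $1-z$. Expanding $\theta^k=\sum_{j\le k}s_{k,j}z^j\frac{d^j}{dz^j}$ and collecting terms, $D=\sum_{j=0}^n g_j(z)z^j\frac{d^j}{dz^j}$ with each $g_j$ a polynomial and $g_n(z)=1-z$; hence the coefficient of $\frac{d^n}{dz^n}$ is $(1-z)z^n$. Dividing through to make the top derivative monic, the coefficient functions become $f_j(z)=g_j(z)\big/\big((1-z)z^{n-j}\big)$, each of which has at most a simple pole at $z=1$ (the numerator is a polynomial and $z^{n-j}$ is non-zero there). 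After translating $z=1$ to the origin, Pochhammer's theorem (Theorem \ref{complexreflection}) applies and shows that $h_1$ acts by a complex reflection. I expect this last part to be the main obstacle: the substance lies in verifying the simple-pole hypothesis for all the coefficients, which rests on the clean fact that the leading coefficient degenerates precisely to $(1-z)$; once that is in hand, Pochhammer delivers the conclusion directly, and the three cases together prove the theorem.
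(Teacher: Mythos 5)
Your proposal is correct and follows essentially the same route as the paper: read off the indicial polynomials at $0$ and at $\infty$ (after the substitution $w=1/z$, $\theta_z\mapsto-\theta_w$) and apply Fuchs' theorem, then observe that the leading coefficient of $D$ in $\frac{d^n}{dz^n}$-form is $z^n(1-z)$ so that the normalized coefficients have at most simple poles at $z=1$, and invoke Pochhammer's theorem. The extra care you take with the leading coefficient $1-z$ and with the orientation of $h_\infty$ only makes explicit what the paper leaves implicit.
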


\begin{proof}  Since  the   hypergeometric  differential  equation  is
already written  in the  ``$\theta $'' form,  and the  coefficients of
powers of $\theta $ are linear polynomials in $z$, it follows that $0$
is a regular singular point  of the differential equation $Du=0$ where
$D$ is the operator in (\ref{hypergeom}). The indicial equation at $0$
is  thus $\prod  _{i=1}^n(t +  \b_j-1))=0$.  By the  Theorem of  Fuchs
(Theorem  \ref{Fuchs}), it  follows that  the monodromy  of  $h_0$ has
characteristic polynomial $\prod (t-e^{2\pi i (1-\b _j)})$. \\

We now  consider the point $\infty$;  by changing the  variable $z$ to
the  variable $w=\frac{1}{z}$,  the operator  $\th  _z= z\frac{d}{dz}$
changes to $-\th _w=  -w\frac{d}{dw}$.  Multiplying throughout by $w$,
the operator $D$ changes to
\[w(-\th _w+\b_1-1)\cdots (-\th _w+\b _n-1)- (-\th _w+\a_1)\cdots (-\th
_w +\a_n),\] which is  just a constant multiple of the hypergeometric
operator
\[D'=(\th _w-\a_1)\cdots  (\th _w-\a _n)-w(\th  _w+1-\b _1)\cdots (\th
_w+1-\b _n). \]  Therefore, $\infty$ is also a  regular singular point
of the equation  $Du=0$ and the monodromy statement  follows as in the
preceding paragraph.  \\

Consider  now  the point  $z=1$.  We write  out  the  operator $D$  of
(\ref{hypergeom}) (which is in ``$\theta $`` form) in terms of powers
of $\frac{d}{dz}$: this is of the form
\[D=                                               z^n\frac{d^n}{dz^n}+
P_{n-1}(z)\frac{d^{n-1}}{dz^{n-1}}+\cdots+P_0(z) \]
\[
-z(z^n\frac{d^n}{dz^n}+Q_{n-1}(z)\frac{d^{n-1}}{dz^{n-1}}+\cdots+Q_0(z))\]
where $P_i,Q_i$ are polynomials in $z$.  Therefore,
\[D=z^n(1-z)\frac{d^n}{dz^n}+R_{n-1}(z)
\frac{d^{n-1}}{dz^{n-1}}+\cdots+R_0(z),\]
where the $R_i(z)$ are polynomials.  Hence the hypergeometric equation
$Dy=0$ at $z=1$ (after normalising the highest coefficient to be $1$),
has the  property that all  its coefficients $\frac{R_i(z)}{z^n(1-z)}$
at  $z=1$  have   at  most  a  simple  pole   at  $z=1$.   By  Theorem
\ref{complexreflection}  it  follows  that  $h_1$ maps  to  a  complex
reflection.
\end{proof}

The following  theorem says that  these facts suffice  to characterise
the monodromy action.

\subsection{Statement of Levelt's Theorem}

The monodromy  representation is  very simply described.  Suppose that
$\a _j-  \b _k$ is  not an integer  for any two suffices  $j,k$. Write
$f(x)=\prod  _{j=1}^n (x-e^{2\pi i  \a _j}),  g(x)=\prod _{k=1}^n
(x-e^{2\pi  i  \b  _k})$.   These  are  monic  polynomials  of  degree
$n$.   Write  $f=x^n+a_{n-1}x^{n-1}+\cdots+a_0$.  The   quotient  ring
$R=\C[x]/(f(x))$ is a  vector space of dimension $n$  and has as basis
the vectors $1,x,\cdots,x^{n-1}$. Write $A$ for the linear operator on
the  ring $R$  given by  multiplication by  $x$. With  respect  to the
foregoing basis, the matrix of $A$ is
\[A= \begin{pmatrix} 0 & 0 & 0 &\cdots &  -a_0 \\ 1 & 0 & 0 & \cdots &
-a_1 \\ 0 & 1 & 0 & \cdots & -a_2 \\ \cdots & \cdots & \cdots & \cdots
\\ 0 &  0 & \cdots & 1  & -a_{n-1} \end{pmatrix} \] and  is called the
``companion  matrix'' of  $f$.  Similarly, let  $B$  be the  companion
matrix of  $g$. Note that $C=A^{-1}B$  is identity on  the first $n-1$
basis vectors. Hence $C$ is a complex reflection.

\begin{theorem}  \label{levelt}  (Levelt,1960)  There exists  a  basis
$\e_1,\cdots, \e_n$  of the space  of solutions to  the hypergeometric
equation  such  that  the  monodromy  representation  sends  $h_0$  to
$B^{-1}$ and $h_{\infty}$ to $A$. \\

Moreover,  if  $\rho  $  is  any  representation  of  the  free  group
$h_0,h_{\infty}$   into   $GL_n(\C)$   such   that   the   images   of
$h_{\infty},h_0^{-1}$ have characteristic  polynomials $f,g$, and such
that  $h_1$ goes to  a complex  reflection, then  $\rho$ is  the above
monodromy representation.
\end{theorem}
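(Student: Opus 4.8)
Proof proposal. The plan is to deduce both assertions from a single rigidity statement: if $f,g$ are monic of degree $n$ with no common root, then any pair $(A',B')$ in $GL_n(\C)\times GL_n(\C)$ for which $A'$ has characteristic polynomial $f$, $B'$ has characteristic polynomial $g$, and $(A')^{-1}B'$ is a complex reflection, is simultaneously conjugate to the companion pair $(A,B)$. Granting this, the first assertion follows by applying it to the actual monodromy: Theorem \ref{hypergeometricmonodromy} already tells us that $A':=\rho(h_\infty)$ has characteristic polynomial $f$, that $B':=\rho(h_0^{-1})$ has characteristic polynomial $g$ (since $h_0$ has eigenvalues $e^{-2\pi i\b_k}$), and that $\rho(h_1)=(A')^{-1}B'$ is a complex reflection; the hypothesis $\a_j-\b_k\notin\Z$ is exactly the statement that $f$ and $g$ have no common root, so the rigidity furnishes a basis in which $\rho(h_\infty)=A$ and $\rho(h_0)=B^{-1}$. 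The second assertion is then a verbatim restatement of the rigidity, applied to an abstract $\rho$ with the prescribed data.

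To prove the rigidity, I would first note that a complex reflection $C'=(A')^{-1}B'$ has $C'-I$ of rank one (it is the identity on a hyperplane, and $C'\neq I$ because $f\neq g$), whence $B'-A'=A'(C'-I)$ also has rank one. Let $\C v'$ be its image. The key claim is that $v'$ is a cyclic vector for $A'$. To see this, let $V_0$ be the $A'$-cyclic subspace generated by $v'$; since $V_0$ is $A'$-stable and $(B'-A')w\in\C v'\subseteq V_0$ for $w\in V_0$, the subspace $V_0$ is stable under $B'$ as well, and on the quotient $\C^n/V_0$ the operators $A'$ and $B'$ induce the same map. Its characteristic polynomial would then divide both $f$ and $g$, hence divide $\gcd(f,g)=1$; therefore $\C^n/V_0=0$ and $v'$ is cyclic. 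Passing to the basis $v',A'v',\dots,(A')^{n-1}v'$ puts $A'$ in the companion form of $f$, with $v'=e_1$; and since two cyclic vectors of the non-derogatory matrix $A$ differ by an element of its centralizer $\C[A]^\times$, a further conjugation commuting with $A$ normalizes $v'$ to $e_1$ once and for all. Thus, after conjugation, $A'=A$ and $B'=A+e_1\eta^{t}$ for a row vector $\eta$, the matrix $B'$ differing from $A$ only in its first row.

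It remains to show that the requirement that $B'$ have characteristic polynomial $g$ pins down $\eta$ uniquely; this is the step I expect to be the crux. The identity
\[\det(xI-A-e_1\eta^{t})=f(x)\bigl(1-\eta^{t}(xI-A)^{-1}e_1\bigr)=f(x)-\sum_{j=1}^n\eta_j\,p_j(x),\]
where the $p_j(x)$ are the entries of the polynomial vector $f(x)(xI-A)^{-1}e_1=\mathrm{adj}(xI-A)e_1$, reduces the problem to a linear one. Because $e_1$ is cyclic for $A$, the assignment $\eta\mapsto\sum_j\eta_jp_j$ is injective — any $\eta$ in its kernel annihilates all $A^ke_1=e_{k+1}$, hence vanishes — and is therefore a bijection onto the space of polynomials of degree $<n$. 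Since $f-g$ has degree $<n$, the equation $f-\sum_j\eta_jp_j=g$ has a unique solution $\eta$, so $B'$ is uniquely determined. Consequently any two pairs satisfying the hypotheses reduce to the identical normal form, and composing the two normalizing conjugations proves the rigidity, and with it Levelt's theorem. The main obstacle, beyond bookkeeping, is precisely this uniqueness: one must verify that the complex-reflection condition leaves only the single functional $\eta$ free and that the characteristic-polynomial constraint is transverse to it, which is what the linear independence of the $p_j$ guarantees.
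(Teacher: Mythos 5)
Your proposal is correct, and its overall strategy --- rigidity forced by the rank-one perturbation $B'-A'$ together with $\gcd(f,g)=1$, applied once to the actual monodromy via Theorem \ref{hypergeometricmonodromy} and once to an abstract $\rho$ --- is the same as the paper's. The execution differs in two places. First, you extract your distinguished vector from the \emph{image} of $B'-A'$, whereas the paper's uniqueness argument in Section 5 takes a nonzero $v$ on the \emph{kernel} side, namely $v\in\bigcap_{i=0}^{n-2}a^{-i}\ker(a-b)$, which is nonzero as an intersection of $n-1$ hyperplanes; both choices are then shown to be cyclic by essentially the same coprimality argument (a proper invariant subspace, respectively quotient, on which $a=b$ would force a common root of $f$ and $g$). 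Second, and more substantively, the paper's choice of $v$ pays off at the end: since $a^iv=b^iv\in\ker(a-b)$ for $i\leq n-2$, the basis $v,av,\cdots,a^{n-1}v$ puts $a$ \emph{and} $b$ simultaneously into companion form (Cayley--Hamilton for $b$ supplies the last column), so no further computation is needed. Your normal form instead has $B'-A$ supported in the first row, and you must pin down the row vector $\eta$ by the matrix determinant lemma; that computation is correct (the $p_j$ span the polynomials of degree $<n$ precisely because $e_1$ is cyclic for $A$), but it is extra work the paper's choice of basis avoids. One small loose end in your write-up: your rigidity shows that all admissible pairs are conjugate to a common normal form, so to conclude that they are conjugate to the companion pair $(A,B)$ you still need the observation --- immediate, and recorded in the paper just before the theorem --- that $(A,B)$ is itself admissible: $A^{-1}B$ fixes $1,t,\cdots,t^{n-2}$ and is hence a complex reflection, and $B-A$ has rank one with image spanned by $f-g$, whose $A$-cyclic subspace is everything since $\gcd(f,g)=1$.
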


The  representation  described  in  Levelt's  theorem  is  called  the
``hypergeometric representation'' and the monodromy group is called a
``hypergeometric''.

We prove Levelt's theorem in the next section.

\section{Levelt's Theorem} \label{levelt}

\subsection{Notation}

Denote  by $R_0$  the ring  $\Z[x_i^{\pm 1},y_i^{\pm  1}]$  of Laurent
polynomials in the variables $x_1, \cdots, x_n$ and $y_1, \cdots, y_n$
with integral coefficients,  and by $K_0$ its quotient  field. Let $R$
denote  the sub-ring of  $R_0$ generated  by the  elementary symmetric
functions $\sigma _i$ in  $x_i$ and the elementary symmetric functions
$\tau _j$ in $y_j$ together with the inverses $\sigma _n ^{-1},\tau _n
^{-1}$.  Denote by  $K$  the  quotient field  of  $R$; then  $K\subset
K_0$. Put
\[f= f(t)=\prod _{i=1}^n (t-x_i)  =t^n+ \sum _{i=1} ^{n-1}A_i t^{n-i},\]
\[g= g(t)= \prod _{j=1}^n (t-y_j)=t^n+\sum _{i=1}^n B_it^{n-i}.\] Then
$f,g$ are polynomials  in $t$ with coefficients in  $R$. $F_2$ denotes
the  free group  on two  generators (which  in the  sequel,  are often
written   $h_0,h_{\infty}$).   Define   a  representation   $\rho$  on
$F_2=<h_0,h_{\infty}>$ by $h_0\mapsto  A$ and $h_{\infty}^{-1} \mapsto
B$ where  $A,B$ are companion matrices of  $f,g$ respectively.  Denote
by $\Gamma = \Gamma (f,g)$  the group generated by $A,B$ in $GL_n(R)$,
and by $G$ the Zariski closure  of $\Gamma$ in $GL_n$. As usual, $G^0$
denotes the  connected component  of identity of  $G$; it is  a normal
subgroup  in  $G$   of  finite  index.  Denote  by   $\Gamma  ^0$  the
intersection of $\Gamma $ with $G^0$.  \\

The  group   $\Gamma$  is   called  the  {\it   hypergeometric  group}
corresponding  to the  parameters $x_i,y_j$.  Sometimes, it  is simply
called the {\bf hypergeometric} corresponding to the polynomials $f,g$
above. \\

Let $\pi  : R \ra S$ a  ring homomorphism with $S$  an integral domain
whose quotient field  is denoted $K_S$. Denote by  $a_i, b_i\in S$ the
images  of  $x_i,y_i$ under  the  map  $\pi :  R  \ra  S$.  Denote  by
$f_S,g_S$ the monic polynomial in $t$ given by
\[f_S(t)=\prod   _{i=1}^n   (t-a_i),   \quad   g_S(t)=\prod   _{i=1}^n
(t-b_i).\]  We view the  free $S$  module $S^n$  as the  quotient ring
$S[t]/(f_S(t))$. With  respect to  the basis $1,t,\cdots,  t^{n-1}$ of
$S^n$,  $A_S$ is  simply the  matrix  of the  ``multiplication by  $t$''
operator.  Denote  by $k$ the  g.c.d. of $f_S$  and $g_S$. $B_S$  is the
operator which acts as follows: $B_S(t^i)=A_S(t^i)=t^{i+1}$ if $i\leq n-2$
and   $B_S(t^{n-1})=   t^n-g_S(t)$.    Let   $W$   be   the   ideal   of
$S^n=S[t]/(f_S(t))$ generated  by the polynomial $k$; then  $W$ is $A_S$
stable.  Hence so is $W\otimes K_S$.

\begin{lemma} \label{irreducible}  The subspace $W_S=W\otimes  K_S$ is
also  $B_S$ stable  and under  the action  of $A_S,B_S$,  the subspace
$W_S$ is irreducible. \\

Moreover, on  the quotient $V_S/W_S$ the  operators $A_S,B_S$ coincide
and  as  a module  over  $A$  (multiplication  by $t$),  the  quotient
$V_S/W_S$ is the ring $K_S[t]/(k(t))$.\\

In  particular, if  $f,g$ are  co-prime  (i.e. $a_i\neq  b_j$ for  any
$i,j$), then $V_S=W_S$ is irreducible for the action of $F_2$.
\end{lemma}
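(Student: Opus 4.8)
The plan is to work throughout over the field $K_S$, so that $V_S=K_S[t]/(f_S(t))$ and $W_S=(k)/(f_S)$ is the image in $V_S$ of the ideal generated by $k$, and to exploit that $K_S[t]$ is a principal ideal domain. The single computation driving everything is as follows. Since $A_S$ is multiplication by $t$, one has $A_S(t^{n-1})=t^n-f_S(t)$ in $V_S$, while by definition $B_S(t^{n-1})=t^n-g_S(t)$, and $A_S,B_S$ agree on $t^0,\dots,t^{n-2}$. Hence $B_S-A_S$ is the rank-one operator killing $t^0,\dots,t^{n-2}$ and sending $t^{n-1}$ to $f_S-g_S$; for any $v=\sum_i c_it^i$ we have $(B_S-A_S)(v)=c_{n-1}(f_S-g_S)$, so $\mathrm{Im}(B_S-A_S)=K_S\cdot(f_S-g_S)$.

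First I would dispatch the $B_S$-stability of $W_S$ and the description of the quotient. As $k=\gcd(f_S,g_S)$ divides both $f_S$ and $g_S$, it divides $f_S-g_S$, which has degree $<n$; hence $f_S-g_S\in(k)$, that is $\mathrm{Im}(B_S-A_S)\subseteq W_S$. For $w\in W_S$ we write $B_Sw=A_Sw+(B_S-A_S)w$, where $A_Sw\in W_S$ because $W_S$ is an $A_S$-stable ideal and $(B_S-A_S)w\in\mathrm{Im}(B_S-A_S)\subseteq W_S$; thus $W_S$ is $B_S$-stable. Passing to $V_S/W_S$, the operator $B_S-A_S$ becomes zero, so $A_S$ and $B_S$ induce the same map there; and since $(f_S)\subseteq(k)$, the third isomorphism theorem gives $V_S/W_S=K_S[t]/(k(t))$ as a module under $A_S=$ multiplication by $t$. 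This settles the stability and quotient claims.

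The substance is the irreducibility of $W_S$ under $\langle A_S,B_S\rangle$. I would first classify the $A_S$-invariant subspaces: an $A_S$-stable $K_S$-subspace of the cyclic module $V_S=K_S[t]/(f_S)$ is a $K_S[t]$-submodule, hence of the form $U_h:=(h)/(f_S)$ for a monic divisor $h\mid f_S$, with $\dim_{K_S}U_h=n-\deg h$ and $K_S$-basis $h,th,\dots,t^{\,n-\deg h-1}h$. Both $A_S$ and $B_S$ are invertible (the $a_i,b_i$ are nonzero because $\pi$ sends the inverted $\sigma_n,\tau_n$ to units, so $f_S(0),g_S(0)\neq0$), hence on a finite-dimensional invariant subspace their restrictions are automatically invertible and group-invariance coincides with stability under $A_S$ and $B_S$. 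A nonzero invariant $U\subseteq W_S=U_k$ is therefore $U=U_h$ with $k\mid h\mid f_S$ and $h\neq f_S$.

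Finally I would impose $B_S$-invariance. The top basis element $t^{\,n-\deg h-1}h$ of $U_h$ has degree exactly $n-1$, so its $t^{n-1}$-coefficient is $1\neq0$; applying $B_S-A_S$ produces the nonzero vector $f_S-g_S$, which must lie in $U=U_h$. Since elements of $U_h$ are exactly the classes of polynomials of degree $<n$ divisible by $h$, this forces $h\mid(f_S-g_S)$, whence $h\mid\gcd(f_S,f_S-g_S)=\gcd(f_S,g_S)=k$; combined with $k\mid h$ this yields $h=k$, i.e. $U=W_S$. Thus $W_S$ is irreducible, and the ``in particular'' is immediate: if $a_i\neq b_j$ for all $i,j$ then $k=\gcd(f_S,g_S)=1$, so $W=(1)$ is all of $V_S$ and $V_S=W_S$ is irreducible for $F_2=\langle h_0,h_\infty\rangle$. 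I expect the main obstacle to be the bookkeeping in this last step: cleanly justifying that every nonzero $A_S$-invariant subspace contains a vector with nonzero $t^{n-1}$-coefficient (so the rank-one perturbation actually ``sees'' it) and translating $f_S-g_S\in U_h$ into the divisibility $h\mid(f_S-g_S)$; once these are in place the Euclidean identity $\gcd(f_S,f_S-g_S)=\gcd(f_S,g_S)$ closes the argument.
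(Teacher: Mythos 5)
Your proof is correct, and its engine is the same as the paper's: the rank-one operator $D=A_S-B_S$ (you use $B_S-A_S$), which kills $1,t,\dots,t^{n-2}$ and sends any class with nonzero $t^{n-1}$-coefficient to a nonzero multiple of $f_S-g_S$; the $B_S$-stability of $W_S$ and the identification $V_S/W_S\cong K_S[t]/(k(t))$ are handled the same way in both arguments. Where you genuinely diverge is the irreducibility step. The paper argues bottom-up: any nonzero $A_S$-stable subspace contains an eigenvector of $A_S$, necessarily a scalar multiple of $f_S(t)/(t-a_i)$, which has degree exactly $n-1$; hitting it with $D$ shows that \emph{every} nonzero $(A_S,B_S)$-stable subspace of $V_S$ contains $f_S-g_S$, hence the $K_S[t]$-submodule it generates, namely $(k)=W_S$ --- a slightly stronger conclusion ($W_S$ is the unique minimal invariant subspace) obtained rather quickly. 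You instead classify all $A_S$-stable subspaces as $U_h=(h)/(f_S)$ with $h\mid f_S$, using that $V_S$ is a cyclic module over the PID $K_S[t]$, and then pin down $h=k$ from $f_S-g_S\in U_h\Rightarrow h\mid f_S-g_S$ together with $\gcd(f_S,f_S-g_S)=\gcd(f_S,g_S)=k$. Your route buys a few things: it never needs eigenvectors, so it requires no comment when $f_S$ has repeated roots and would work over any field containing the coefficients of $f_S,g_S$ (not necessarily the roots $a_i$, which the paper's eigenvector step uses); it also makes explicit two points the paper glosses over, namely that stability under the group generated by $A_S,B_S$ reduces to stability under the operators themselves (via invertibility, from $f_S(0),g_S(0)$ being units) and that a degree-$(n-1)$ witness exists in every candidate subspace (the top basis vector of $U_h$). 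One cosmetic remark: when you call $f_S-g_S$ ``nonzero'' you are implicitly assuming $f_S\neq g_S$, but this is automatic in your situation, since a nonzero $U_h\subseteq W_S$ forces $k\mid h$ with $h\neq f_S$, hence $k\neq f_S$, hence $f_S\neq g_S$; it would be worth one line to say so.
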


\begin{proof} We temporarily  write $A_S=A,B_S=B$.  Put $D=A-B$. Then,
the  image  of  $D$ on  $V_S$  is  the  line generated  by  $f_S-g_S$.
Moreover, $D$  is zero on  the monomials $1,t,\cdots, t^{n-2}$  and is
$f-g$ on $t^{n-1}$. Therefore, the  image under $D$ of a polynomial of
degree exactly $n-1$ is a non-zero multiple of $f-g$. \\

Any  subspace  of  $V_S$  which   is  stable  under  $A$  contains  an
eigenvector  for  $A$;  these  eigenvectors  are of  the  form  $\e_i=
\frac{f(t)}{x-a_i}$ for some $i$.  This a polynomial of degree exactly
$n-1$.  Hence $D(\e_i)$  is a  non-zero multiple  of $f-g$;  thus any
subspace of $V_S$ which is stable under $A,D$ contains $f-g$ and hence
contains $W_S=K_S[t](f-g)+K_S[t]f=K_S[t]k(t)=(k(t))$.  This proves the
first part of the Lemma. \\

On the quotient  $V_S/W_S$, the operator $D$ is  zero, since the image
of   $D$    lies   in   $W_S$.    Hence   $A=B$   on    the   quotient
$V_S/W_S=K_S[t]/(k(t))$. This proves the second part. \\

The third part is a corollary of the first part. 
\end{proof}

\begin{theorem} \label{Levelt} (Levelt) Suppose that $a_i\neq b_j$ for
any $i,j$.   Suppose $h_0\mapsto a$ and  $h_{\infty}^{-1}\mapsto b$ is
any  other   irreducible  representation   $\rho  '$  of   $F_2$  into
$GL_n({\overline K_S})$  such that the following  {\rm two} conditions
hold.   (1)  the characteristic  polynomial  of  $a$ is  $f_S(t)=\prod
(t-a_i)$  and the  characteristic polynomial  of $b$  is $g_S(t)=\prod
(t-b_j)$.  (2) $a^{-1}b$ is identity  on a co-dimension one subspace of
$K^n$. \\

Then $\rho '$ is equivalent to $\rho$.
\end{theorem}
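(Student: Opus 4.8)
The plan is to show that every irreducible pair $(a,b)$ satisfying conditions (1) and (2) is conjugate to a single canonical pair of matrices depending only on $f_S$ and $g_S$. Since the companion pair $(A_S,B_S)$ defining $\rho$ is itself such a pair — it is irreducible by Lemma \ref{irreducible} (as $a_i\neq b_j$ means $f_S,g_S$ are coprime), its two matrices have characteristic polynomials $f_S,g_S$, and $A_S^{-1}B_S$ is a complex reflection — both $\rho$ and $\rho'$ will reduce to the same canonical form and hence be equivalent. First I would rephrase condition (2): since $a$ is invertible, $a^{-1}b-I=a^{-1}(b-a)$ has rank one exactly when $D:=a-b$ has rank one, so (2) says $\mathrm{rk}\,D=1$; moreover $D\neq 0$, since $a=b$ would make every $a$-eigenspace $\langle a,b\rangle$-stable and contradict irreducibility for $n\ge 2$. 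Fix a vector $u$ spanning the line $\mathrm{im}\,D$.

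Second, I would extract a cyclic vector from irreducibility. Let $U$ be the $a$-cyclic subspace generated by $u$. It is $a$-invariant by construction, and because $b=a-D$ with $D(\overline{K_S}^{\,n})\subseteq\overline{K_S}\,u\subseteq U$, it is also $b$-invariant; thus $U$ is $\langle a,b\rangle$-stable, so irreducibility forces $U=\overline{K_S}^{\,n}$, i.e. $u$ is cyclic for $a$. Passing to the basis $u,au,\dots,a^{n-1}u$ and using that $a$ has characteristic polynomial $f_S$, the matrix of $a$ in this basis is exactly the companion matrix $A_S$, while $u$ becomes $e_1$. Hence after conjugation I may assume $a=A_S$ and $u=e_1$, the only residual freedom being conjugation by the centralizer of $A_S$.

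Third, I would pin down $b$. Since $\mathrm{im}\,D=\overline{K_S}\,e_1$, the matrix $D$ has all rows zero except the first, so $D=e_1\phi$ for a row vector $\phi$ and $b=A_S-e_1\phi$. Imposing (1) via the matrix determinant lemma, the characteristic polynomial of $b$ is
\[\det(tI-A_S+e_1\phi)=f_S(t)\bigl(1+\phi\,(tI-A_S)^{-1}e_1\bigr)=f_S(t)+\phi\,\mathrm{adj}(tI-A_S)\,e_1,\]
and requiring this to equal $g_S(t)$ yields the linear condition $\phi\,\mathrm{adj}(tI-A_S)\,e_1=g_S(t)-f_S(t)$ on the $n$ entries of $\phi$. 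The crux — and the step I expect to be the main obstacle — is that this system has a \emph{unique} solution. This is precisely the assertion that $\phi\mapsto\phi\,\mathrm{adj}(tI-A_S)e_1$ is a bijection onto polynomials of degree $\le n-1$, equivalently that $(A_S,e_1)$ is controllable; but $e_1,A_Se_1,\dots,A_S^{n-1}e_1=e_1,e_2,\dots,e_n$ is a basis, so $e_1$ is cyclic for $A_S$ and the bijectivity follows. Thus $\phi$, and hence $b$, is uniquely determined by $f_S,g_S$.

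Finally, the construction is canonical: the sole choice was the scalar normalizing $u$, which only conjugates by a scalar matrix and changes nothing. Therefore any irreducible pair obeying (1)–(2) is conjugate to the single pair $\bigl(A_S,\,A_S-e_1\phi\bigr)$ determined by $f_S,g_S$. Applying this to $\rho=(A_S,B_S)$ and to $\rho'=(a,b)$ shows that both are equivalent to this common normal form, whence $\rho'$ is equivalent to $\rho$, as claimed.
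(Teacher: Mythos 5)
Your proof is correct, but it follows a genuinely different route from the paper's. The paper takes a vector $v\neq 0$ in $\bigcap_{i=0}^{n-2}a^{-i}\ker(a-b)$ (nonzero as an intersection of $n-1$ hyperplanes), shows $v$ is cyclic for $a$ by deriving from the contrary a nonzero invariant subspace on which $a=b$, which would force $f_S$ and $g_S$ to share a root; it then reads off, in the basis $v,av,\dots,a^{n-1}v$, that $a$ \emph{and} $b$ are simultaneously the companion matrices of $f_S$ and $g_S$ (using $a^iv=b^iv$ for $i\le n-1$). You instead take a generator $u$ of $\mathrm{im}(a-b)$, get cyclicity of $u$ from irreducibility (the paper's cyclicity argument uses only $a_i\neq b_j$, not irreducibility of $\rho'$), and then, rather than computing $b$ directly, write $b=A_S-e_1\phi$ and pin down $\phi$ uniquely via the matrix determinant lemma and the controllability of $(A_S,e_1)$. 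Your argument is a clean rigidity statement --- a rank-one perturbation of a companion matrix is determined by its characteristic polynomial --- and it correctly closes the loop by checking, via Lemma \ref{irreducible} and the complex-reflection property of $A_S^{-1}B_S$, that the companion pair itself satisfies the hypotheses and hence reduces to the same normal form. What you give up is that your normal form is not visibly $(A_S,B_S)$ (your cyclic vector is the class of $g_S-f_S$, not $1$, when applied to the companion pair), so you only conclude equivalence of the two representations rather than exhibiting a basis in which $b$ is literally the companion matrix of $g_S$; the paper's more elementary computation yields that stronger, explicit conclusion directly. Two small points of precision: the phrase ``residual freedom being conjugation by the centralizer of $A_S$'' should be ``by centralizer elements preserving the line $\overline{K_S}e_1$'', which (the centralizer of a nonderogatory matrix being $\overline{K_S}[A_S]$) are exactly the scalars, as your final sentence in effect asserts; and $D\neq 0$ follows more simply from $f_S\neq g_S$ than from the eigenspace argument.
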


\begin{proof}  Put $D'=a-b$  and Let  $W$ be  the kernel  of  $D'$. By
assumption, $W$ has co-dimension one in $V$. Write
\[X=\cap _{i=0}^{n-2}a^{-i}W.\] Since $X$  is an intersection of $n-1$
hyperplanes in $V$, $X$ is non-zero.  Let $v\in X$, with $v\neq 0$. \\

We claim that $v$ is cyclic for the action of $a$.  Suppose not. Then,
$v,av, \cdots,  a^{n-1}v$ are linearly  dependent. We then  claim that
$a^{n-1}v$ is a linear combination of $v,av, \cdots, a^{n-2}v$: \\

Suppose $v,av,  \cdots, a^{n-2}v$  are already linearly  dependent. By
applying a suitable  power of $a$ to a  linear dependence relation, we
see  that $a^{n-1}v$  is a  linear combination  of the  vectors $v,av,
\cdots, a^{n-2}v$. \\

Suppose $v,av,  \cdots, a^{n-2}v$ are linearly  independent. Since the
vectors $v,av,  \cdots, a^{n-1}v$  are linearly dependent,  it follows
that   $a^{n-1}v$  is   a   linear  combination   of  $v,av,   \cdots,
a^{n-2}v$. \\

Since  $f_S(a)=0$, it  follows that  the  span $E$  of $v,av,  \cdots,
a^{n-2}v$  is  $a$  stable.   Since  all the  vectors  $v,av,  \cdots,
a^{n-2}v$ lie in  $W$ by the definition of $X$,  it follows that $a=b$
on $E$  and hence $E$  is stable under  $\Gamma$. Since $E\neq  0$, it
follows that the  characteristic polynomial of $a=b$ on  $E$ are equal
and  have  a  common  eigenvalue, contradicting  the  assumption  that
$a_i\neq b_j$ for any $i,j$. Therefore, $v$ is a cyclic vector for the
action of $a$. \\

Hence $v,av,  \cdots, a^{n-1}v$  is a basis  for $V$. It  follows that
with respect to this basis, the  matrix of $A$ is the companion matrix
of $f_S$. \\

By  the construction  of $X$,  we have  $a^iv\in W$  for  $i\leq n-2$.
Therefore, $ba^iv=aa^iv=a^{i+1}v$ for $i\leq n-2$.  Induction on $i\leq
n-2$ shows that  $a^iv=b^iv$ for all $i\leq n-1$.  With respect to the
basis  $v,av,  \dots, a^{n-1}v$  of  $V$, the  matrix  of  $a$ is  the
companion matrix of $f_S(t)$, and  that of $b$ is the companion matrix
of $g_S(t)$.  This completes the proof of the Theorem.
\end{proof}

\newpage

\section{Results of Beukers-Heckman}

The  Zariski  closure  of  the  hypergeometric  also  has  a  pleasant
description. This is described  by Beukers and Heckman \cite{Beu-Hec}.
For ease  of exposition, we assume  that the roots of  $f(x),g(x)$ are
roots of  unity (i.e.  $\a_j, \b  _k$ are rational numbers),  and that
$f,g$  are products  of  cyclotomic  polynomials.  Then  $f(x),g(x)\in
\Z[x]$.  Moreover, $f(0)=\pm 1$ and $g(0)=\pm 1$. We recall from the previous section that the monodromy group $H(f,g)$ is generated by the companion matrices $A,B$ of the polynomials $f,g$ respectively. 

\subsection{The Finite Case} 

We may assume that  the numbers $\a _j$ and $\b _k$  lie in the closed
open interval  $[0,1)$.  We  say that the  numbers $\a_j$ and  $\b _k$
``interlace''  if between  any two  $\a  _j$ there  is a  $\b _k$  and
conversely. \cite{Beu-Hec} give a criterion for the monodromy group to
be finite in terms of the parameters $\a,\b$:

\begin{theorem}    (Beukers-Heckman)    The    hypergeometric    group
corresponding to the  parameters $\a _j, \b _k$ is  finite if and only
if the parameters $\a_j$ and $\b _k$ interlace.
\end{theorem}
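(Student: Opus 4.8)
The plan is to translate finiteness of the hypergeometric group into positive-definiteness of an invariant Hermitian form, and then to read off the interlacing condition from the signature of that form. First I would observe that, since $f$ and $g$ are products of cyclotomic polynomials, they are monic with constant term $\pm 1$, so their companion matrices $A,B$ lie in $GL_n(\Z)$; hence $\G=\langle A,B\rangle$ is a discrete subgroup of $GL_n(\C)$. A discrete subgroup is finite if and only if it is relatively compact, and a subgroup of $GL_n(\C)$ is relatively compact if and only if it preserves a positive-definite Hermitian form (equivalently, lies in a conjugate of $U(n)$). Thus $\G$ is finite precisely when it preserves a definite Hermitian form.

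The second step is to produce such an invariant form and to show it is essentially unique. Because each $\a_j,\b_k$ is rational, the eigenvalues $e^{2\pi i\a_j}$ and $e^{2\pi i\b_k}$ of $A$ and $B$ lie on the unit circle, so for them inversion agrees with complex conjugation. It follows that the conjugate-dual representation $\g\mapsto(\rho(\g)^*)^{-1}$ sends the generators to matrices with the \emph{same} characteristic polynomials $f,g$ and sends $h_1$ again to a complex reflection. Under the standing coprimality hypothesis $\a_j\neq\b_k$, Lemma \ref{irreducible} gives irreducibility, so Levelt's uniqueness statement (Theorem \ref{Levelt}) forces this conjugate-dual representation to be equivalent to $\rho$. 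Such an equivalence is exactly a nonzero $\G$-invariant Hermitian form $H$; by Schur's lemma the space of intertwiners is one-dimensional, so $H$ is nondegenerate and unique up to a real scalar. Consequently $\G$ is finite if and only if this distinguished form $H$ is definite.

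The crux, and the step I expect to be the main obstacle, is to compute the signature $(p,q)$ of $H$ as a function of the cyclic positions of the $\a_j$ and $\b_k$ on $\R/\Z$. On the locus where the two sets of parameters are disjoint the form $H$ stays nondegenerate, so its signature is locally constant there and therefore depends only on the cyclic order type of the $2n$ points. I would compute it by writing the Gram matrix of $H$ explicitly in the Levelt cyclic basis $v,Av,\dots,A^{n-1}v$ and evaluating its leading principal minors, or equivalently by a deformation argument: as a single parameter is moved so that one $\a_j$ crosses an adjacent $\b_k$, the form degenerates only at the crossing and its signature jumps by exactly one. Tracking these jumps shows that $\min(p,q)=0$ precisely for the alternating arrangement, i.e.\ when the $\a_j$ and $\b_k$ interlace. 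Pinning down this combinatorial signature formula and controlling the behaviour at the crossing walls (where coprimality, hence irreducibility, fails) is the technical heart of the argument.

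Finally I would assemble the equivalence. If the parameters interlace, the signature computation makes $H$ definite, so $\G$ lies in a conjugate of the compact group $U(n)$; being also discrete in $GL_n(\C)$, it is finite. Conversely, if $\G$ is finite, averaging the standard Hermitian form over $\G$ yields an invariant positive-definite form, which by the uniqueness above is a real multiple of $H$; hence $H$ is definite, and the signature formula then forces the $\a_j$ and $\b_k$ to interlace. This establishes both directions of the theorem.
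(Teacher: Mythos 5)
The paper itself offers no proof of this theorem---it is quoted from Beukers--Heckman---so there is no internal argument to compare yours against. Your strategy is in fact the one Beukers and Heckman use: finiteness of the discrete group $\G=\langle A,B\rangle\subset GL_n(\Z)$ is equivalent to relative compactness, hence to the existence of an invariant positive-definite Hermitian form; the conjugate-dual representation has the same characteristic polynomials (because the eigenvalues lie on the unit circle) and still sends $h_1$ to a complex reflection, so Levelt's uniqueness theorem together with Schur's lemma produces an essentially unique nondegenerate invariant Hermitian form $H$; and the theorem reduces to showing that $H$ is definite exactly when the $\a_j,\b_k$ interlace. All of that framing is correct.

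The gap is exactly where you locate it, and it is not a small one: the signature computation is the entire content of the theorem, and your deformation sketch does not yet establish it. Three things are missing. First, you need an anchor: a configuration in which the signature of $H$ is actually computed. Beukers and Heckman do this by writing the form explicitly and arriving at the closed formula $|p-q|=\bigl|\sum_{j}(-1)^{j+m_j}\bigr|$, where $m_j$ counts the $\b_k$ preceding $\a_j$ on the circle; a locally-constant-signature argument with no base point determines nothing. Second, at a wall where some $\a_j=\b_k$ the representation becomes reducible, Levelt's uniqueness fails, and $H$ need not be unique or nondegenerate there, so the claim that $H$ ``degenerates only at the crossing'' with exactly one eigenvalue changing sign must be proved rather than assumed; note also that a simple eigenvalue crossing zero changes $p-q$ by $2$, and that $H$ is only determined up to a real scalar of either sign, so only $|p-q|$ is intrinsic and the bookkeeping must be phrased accordingly. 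Third, with the jump data in hand you must still verify the combinatorial statement that $|p-q|=n$ occurs precisely for the interlacing arrangements of the $2n$ points. The converse direction (finite implies an averaged definite invariant form, hence $H$ definite, hence interlacing) is fine once the signature formula is established.
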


\subsection{Imprimitivity}    We   say    that    $f(X),g(X)$   are
``imprimitive'' if  there exists an integer $k\geq  2$ and polynomials
$f_1,g_1$  such that  $f(x)=f_1(x^k)$, $g(x)=g_1(x^k)$.  Otherwise, we
say that $f,g$ are a ``primitive'' pair. \\

We assume  henceforth that  $f,g$ form a  primitive pair and  that $\a
_j,\b _k$  do not  satisfy the interlacing  condition. Let $G$  be the
Zariski      closure       of      the      hypergeometric.      Write
$c=\frac{f(0)}{g(0)}$. Then $c=\pm 1$.

\begin{theorem}  \label{zariski}  (Beukers-Heckman)  Suppose that  the
roots of $f,g$ do not interlace, $f,g \in \Z[x]$ are primitive. \\

If $c=-1$ then the Zariski closure of the hypergeometric is isomorphic
to $O(n)$, the orthogonal group on $n$ variables. \\

If  $c=1$, then  the Zariski  closure is  the symplectic  group $Sp_n$
(under our assumptions, $n$ will necessarily be even).
\end{theorem}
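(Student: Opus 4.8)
The plan is to produce a $\G$-invariant nondegenerate bilinear form from self-duality of the hypergeometric representation, to determine its type (symmetric or alternating) from the value of $c$, and then to show that the Zariski closure $G$ fills up the classical group preserving that form. First I would invoke Levelt's rigidity (Theorem \ref{Levelt}). Since $f,g$ are products of cyclotomic polynomials, their root sets are stable under $\z\mapsto\z^{-1}$, so $A$ and $B$ are each conjugate to their own inverses and $f,g$ are self-reciprocal up to sign. Consider the contragredient representation $\rho^\vee(\g)={}^t\rho(\g)^{-1}$: its values on $h_\infty$ and $h_0$ are ${}^tA^{-1}$ and ${}^tB$, whose characteristic polynomials equal those of $A^{-1}$ and $B$, hence $f$ and $g$ by the inversion symmetry, and the image of $h_1$ is again a complex reflection. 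Thus $\rho^\vee$ meets the hypotheses of the uniqueness clause of Theorem \ref{Levelt} with exactly the data of $\rho$, so $\rho^\vee\cong\rho$. Equivalently there is $J\in GL_n$ with ${}^t\rho(\g)J\rho(\g)=J$ for all $\g$, i.e. $\G$ preserves the form $\langle x,y\rangle={}^txJy$. Irreducibility (valid since $a_i\neq b_j$, by Lemma \ref{irreducible}) together with Schur's lemma makes $J$ unique up to scalar; as ${}^tJ$ is also invariant, $J=\pm\,{}^tJ$, so the form is symmetric or alternating.

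Next I would read off the type from $c$ via the reflection $C=A^{-1}B$. Its single nontrivial eigenvalue is $\det C=\det B/\det A=g(0)/f(0)=1/c=c$. If $c=-1$, then $C$ is an honest involutive reflection; writing $v$ for its $(-1)$-eigenvector, invariance of $\langle\,,\rangle$ forces $v\perp H$ for the fixed hyperplane $H$, and nondegeneracy then forces $\langle v,v\rangle\neq 0$, which is impossible for an alternating form. Hence $\langle\,,\rangle$ is symmetric and $G\subseteq O(J)\cong O(n)$. If $c=1$, then $C$ is unipotent, a transvection; a transvection cannot preserve a nondegenerate symmetric form (already visible for $n=2$), so $\langle\,,\rangle$ must be alternating, $G\subseteq Sp(J)\cong Sp_n$, and nondegeneracy of an alternating form forces $n$ to be even.

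Finally I would promote these inclusions to equalities. The standing hypotheses are precisely what is needed: coprimality gives irreducibility, non-interlacing forces $\G$ to be infinite (by the finiteness criterion above, interlacing being equivalent to finiteness), and primitivity of $(f,g)$ forbids any $\G$-stable imprimitivity or tensor decomposition. Together these show that the identity component $G^0$ still acts irreducibly, so $G^0$ is a connected reductive group acting irreducibly inside $Sp(J)$ (resp. $SO(J)$) and containing a transvection (resp. a reflection). One then applies the classification of such subgroups: a connected reductive irreducible subgroup of a symplectic or orthogonal group that contains a transvection or a reflection must be the whole group. Thus $G^0=Sp_n$ when $c=1$; and $G^0=SO(n)$ when $c=-1$, while the reflection $C$ of determinant $-1$ lies in $G$ but not in $SO(n)$, whence $G=O(n)$.

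The hard part will be this last classification input — that an irreducible reductive group containing a transvection or a reflection exhausts $Sp$ or $O$. This is where the real content of Beukers--Heckman lies: one must first establish Lie-irreducibility, excluding the finite, imprimitive and tensor-induced alternatives (this is exactly where primitivity and non-interlacing are consumed), and then apply the structural theorem on linear groups generated by transvections and reflections. By comparison, the duality and form-type computations of the first two steps are essentially bookkeeping.
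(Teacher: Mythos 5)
Note first that the paper does not prove Theorem \ref{zariski}: it explicitly defers to \cite{Beu-Hec} (``We do not prove this theorem, since that would take us too far afield''), so there is no in-paper argument to compare yours against. Your outline does follow the genuine Beukers--Heckman strategy, and its first two stages are correct: since the roots of $f,g$ are closed under inversion, the contragredient $\rho^{\vee}$ satisfies the hypotheses of the uniqueness clause of Theorem \ref{Levelt} with the same data as $\rho$, whence an invariant bilinear form $J$, unique up to scalar by Lemma \ref{irreducible} and Schur, hence $J=\pm{}^tJ$; and the exceptional eigenvalue $c$ of $C=A^{-1}B$ correctly decides the type (a reflection with $\langle v,v\rangle\neq 0$ is impossible for an alternating form; a transvection cannot preserve a nondegenerate symmetric form in characteristic $0$). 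This is all sound bookkeeping.

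The genuine gap is in your final classification step, and it is not merely the acknowledged ``hard part'' being hard --- in the orthogonal case the statement you propose to invoke is the wrong one. You want to apply ``a connected reductive irreducible subgroup of an orthogonal group that contains a reflection must be the whole group.'' But $G^0$, being connected, lies in $SO(J)$ and therefore can never contain the reflection $C$, which has determinant $-1$; the hypothesis of your classification statement is unsatisfiable there, as your own last sentence ($C\in G\setminus SO(n)$) already betrays. The input actually needed, and what Beukers--Heckman prove, concerns a connected irreducible group \emph{normalized} by a pseudo-reflection: if $G^0\subset GL(V)$ is connected, acts irreducibly, and is normalized by an element $h$ with $h-1$ of rank one, then either $SL(V)\subset G^0$, or $h$ is a transvection and $G^0=Sp(V)$, or $h$ is a reflection and $G^0=SO(V)$. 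Establishing this requires excluding the genuinely existing proper connected irreducible subgroups of $SO(n)$ and $Sp(n)$ (for instance $G_2\subset SO(7)$, $Spin(7)\subset SO(8)$, principal $SL_2$'s), by showing none of them is normalized by a pseudo-reflection; that exclusion, together with the unproved translation of Lie-reducibility of $G^0$ into the algebraic imprimitivity $f(x)=f_1(x^k)$, $g(x)=g_1(x^k)$ of the pair, is where essentially all of the content of the theorem lives. In the symplectic case your formulation is at least coherent ($C$ is unipotent, hence lies in $G^0$, and its Zariski closure is a one-parameter group of transvections), but the same classification theorem still has to be supplied; as written, the orthogonal half of your argument does not close.
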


We do not prove this theorem, since that would take us too far afield. 
We refer to \cite{Beu-Hec} for a proof of a more general result from 
which Theorem \ref{zariski} follows. 

\section{Symplectic Case} 

In  this section,  we  will assume  that the  Zariski  closure of  the
hypergeometric is a symplectic group; i.e. assume that $f,g\in \Z[x]$,
$f,g$  form a  primitive  pair and  that  the roots  of  $f,g$ do  not
interlace.  Assume that $f(0)=g(0)=1$ so that in Theorem \ref{zariski}
$c=1$. By  the result of  Beukers and Heckman  (Theorem \ref{zariski})
the   hypergeometric  $H(f,g)$   is  a   Zariski  dense   subgroup  of
$Sp_{\Omega}(\Z)$ for a non-degenerate  symplectic form $\Omega$ on $\Q
^n$.  It  is then  an interesting  question to  ask when  $H(f,g)$ has
finite index (i.e.  when is $H(f,g)$ an  arithmetic symplectic group).
There  is  no  complete  characterisation   but  some  cases  are  now
known.

\subsection{Arithmetic Groups}

See \cite{Sin-Ven} for the following result.

\begin{theorem}  \label{leading}  suppose that  $f,g$  are  as in  the
beginning    of   this    subsection   and    that   the    difference
$f-g=c_0+\cdots+c_dX^d$ with leading  coefficient $c=c_d\neq 0$; assume
that  $\mid  c  \mid  \leq  2$. Then  $H(f,g)$  has  finite  index  in
$Sp_{\Omega  }(\Z)$; thus  the hypergeometric  group is  an arithmetic
group.

\end{theorem}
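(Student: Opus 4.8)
The plan is to present $H(f,g)$ as a group generated by symplectic transvections and to verify the hypotheses of a transvection--group arithmeticity criterion, reducing the whole statement to an explicit computation of symplectic pairings in which the leading coefficient $c$ of $f-g$ is the single ``long range'' entry. First I would record that $B=AC$ with $C=A^{-1}B$ a complex reflection (Theorem \ref{complexreflection}), so $\Gamma=H(f,g)=\langle A,C\rangle$. Since $f(0)=g(0)=1$ we get $\det A=\det B=(-1)^n$ and $\det C=1$, so the reflection $C$ is in fact a \emph{symplectic transvection}, $C(x)=x+\kappa\,\Omega(x,v)\,v$, whose direction $v$ spans the image of $I-C=A^{-1}(A-B)$. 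By Lemma \ref{irreducible} the image of $A-B$ is the line through $g-f$, hence $v$ is a scalar multiple of $A^{-1}(g-f)$. Because $f,g$ are coprime, $\gcd(f,g-f)=\gcd(f,g)=1$, so $g-f$ is a unit in $\C[t]/(f)$ and therefore a cyclic vector for $A$; consequently the conjugate transvections $T_i=A^iCA^{-i}$, pointing in the directions $v_i=A^iv$, satisfy that $v_0,\dots,v_{n-1}$ form a basis of $\Q^n$ and (after checking integrality) span a finite-index sublattice of the $\Gamma$-invariant lattice. All the $T_i$ lie in $\Gamma$.

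Next I would compute the Gram matrix of the $v_i$. As $A$ preserves $\Omega$, every pairing has the form $\Omega(v_i,v_j)=\Omega(v,A^{j-i}v)$, so the matrix is governed by the numbers $p_k=\Omega(v,A^kv)$, with $p_{-k}=-p_k$ and $p_0=0$. The point is that these $p_k$ are read off directly from $f$ and $f-g$: since $g-f$ has zero constant term (because $f(0)=g(0)=1$) and degree $d$, the direction $v$ has degree $d-1$, the Gram matrix is banded, a chain of its entries linking successive $v_i$ equals $\pm 1$, and the single extreme entry, occurring at distance $d$, equals $\pm c$. Thus the reflection vectors form a ``chain'' whose consecutive members pair unimodularly and whose only long-range interaction is the integer $c$ with $|c|\le 2$.

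Finally I would feed this data into the arithmeticity criterion for groups generated by transvections. Zariski density of $\Gamma$ in $Sp_\Omega$ is supplied by Theorem \ref{zariski} of Beukers--Heckman. The unimodular adjacent pairings let one generate, from each pair $T_i,T_{i+1}$, a copy of $SL_2(\Z)$ on the hyperbolic plane spanned by $v_i$ and $v_{i+1}$; overlapping these planes propagates enough unipotent one-parameter data across the whole lattice, and combined with Zariski density (and the congruence subgroup property of $Sp_{2m}(\Z)$, $m\ge 2$) this forces $\Gamma$ to have finite index in $Sp_\Omega(\Z)$.

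The step I expect to be the main obstacle is the case $|c|=2$. When $|c|=1$ the extreme pairing is itself unimodular, the entire configuration is unimodular, and the chain argument applies essentially verbatim. When $|c|=2$, however, a pairing equal to $\pm 2$ does not by itself yield a unimodular gluing, and the naive propagation breaks down at the long-range edge; one must instead produce an auxiliary element --- a commutator of two of the transvections, or a suitable word in $A$ and the $T_i$ --- that effectively reduces the coefficient $2$ and restores generation of the relevant rank-two subgroup. This is the delicate technical heart of the proof, and it is precisely where the hypothesis $|c|\le 2$ enters; for $|c|\ge 3$ both the method and, in general, the arithmeticity conclusion itself can fail, some such hypergeometric groups being thin.
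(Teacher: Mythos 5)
A preliminary remark: the paper does not prove Theorem \ref{leading} at all --- it is stated with a pointer to \cite{Sin-Ven} --- so there is no in-paper argument to compare against; I am comparing your plan with the proof in \cite{Sin-Ven}. Your overall architecture is the right one and is essentially theirs: since $f(0)=g(0)=1$ the reflection $C=A^{-1}B$ has determinant $1$, hence is a symplectic transvection $x\mapsto x+\kappa\,\Omega(x,v)v$ with direction $v$ spanning the image of $I-C$, i.e. proportional to $A^{-1}(f-g)$; coprimality of $f,g$ makes $v$ cyclic for $A$, so the conjugates $T_i=A^iCA^{-i}$ are transvections along a basis $v_i=A^iv$ of a finite-index sublattice; the pairings $\Omega(v,A^kv)$ are read off from the coefficients of $f-g$; and one concludes via a criterion producing arithmeticity from Zariski density plus sufficiently many integral unipotents. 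All of this is correct and is what \cite{Sin-Ven} does.

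The gap is in the quantitative heart, where your description of the Gram data is concretely wrong. Since $f(0)=g(0)=1$ the constant term of $f-g$ vanishes, so $v=A^{-1}(f-g)=c_1+c_2t+\cdots+c_dt^{d-1}$ has degree $d-1$; and because $C$ fixes $1,t,\dots,t^{n-2}$, the vector $v$ is $\Omega$-orthogonal to $1,t,\dots,t^{n-2}$. A one-line computation then gives $\Omega(v,A^kv)=0$ for $1\le k<n-d$ and $\Omega(v,A^{n-d}v)=\pm c$ (suitably normalized). Thus \emph{adjacent} vectors $v_i,v_{i+1}$ pair to zero (unless $d=n-1$), there is no chain of unimodular consecutive pairings, and the distinguished entry $\pm c$ is the \emph{nearest} nonzero one, occurring at distance $n-d$ rather than $d$; the entries at larger distances involve reduction mod $f$ and are not $\pm1$ in general. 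So the ``propagate $SL_2(\Z)$ along a unimodular chain'' mechanism does not exist as described. What actually happens is that $T_{v}$ and $T_{A^{n-d}v}$, restricted to the symplectic plane they span, generate $\langle(\begin{smallmatrix}1&c\\0&1\end{smallmatrix}),(\begin{smallmatrix}1&0\\-c&1\end{smallmatrix})\rangle$, which has finite index in $SL_2(\Z)$ exactly when $|c|\le2$ (Sanov's free subgroup for $|c|=2$; no ``auxiliary element reducing the coefficient'' is needed, and none exists for $|c|\ge3$, where this subgroup is thin). This seed must then be enlarged by an induction over nested nondegenerate sublattices until one captures a finite-index subgroup of $U(\Z)$ for the unipotent radical $U$ of a proper parabolic; the closing step is not the congruence subgroup property but the Raghunathan--Tits--Vaserstein--Venkataramana criterion that a Zariski-dense subgroup of $Sp_{2g}(\Z)$, $g\ge2$, containing such a unipotent piece has finite index. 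As written, your proposal has the correct skeleton but the actual pairing computation and the induction converting $|c|\le2$ into arithmeticity are missing or incorrect.
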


As a family of  examples, consider, for an even integer $n$,  
\[f(X)=\frac{X^{n+1}-1}{X-1}=X^n+x^{n-1}+\cdots+X+1, \] 
\[g(X)=(X+1)\frac{X^n-1}{X-1}=X^n+2X^{n-1}+2X^{n-2}+\cdots    +2X+1.\]
The    difference   $f-g=-(X^{n-1}+X^{n-2}+\cdots+X)$    has   leading
coefficient $c=-1$  and hence  the hypergeometric $H(f,g)$  has finite
index in $Sp_{\Omega}$, by Theorem \ref{leading}.

\subsection{Thin Groups}

We recall the following definition (see \cite{Sar} for details)

\begin{defn} Let $\Gamma  \subset SL_n(\Z)$ be a subgroup  and $G$ its
Zariski closure in  $SL_n$. Then $G$ is defined over  $\Q$ and $\Gamma
\subset G(\Z)=G\cap SL_n(\Z)$.  We say that $\Gamma$ is  {\bf thin} if
$\Gamma$  has  infinite  index  in $G(\Z)$.  Otherwise,  we  say  that
$\Gamma$ is arithmetic.
\end{defn}

Note that the notion of thinness and of arithmeticity depends on the 
embedding $\Gamma \subset SL_n(\Z)$. 

It  is widely  believed  that most  hypergeometric  groups in  Theorem
\ref{zariski} are  thin. Theorem \ref{leading} says  however, that not
all the hypergeometrics are thin. There  is no general criterion as to
when the hypergeometric  are thin, except when the  Zariski closure is
$O(n,1)$  (see \cite  {FMS}).  In  the next  subsection,  we will  see
examples  of   thin  hypergeometrics   with  Zariski   closure  $Sp_4$
(constructed by Brav and Thomas \cite{Br-Th}).

\subsection{Fourteen Families}

Of  special   interest  are   the  hypergeometrics   corresponding  to
$f=(X-1)^4$ (i.e. when the monodromy around infinity is {\it maximally
unipotent}. The  number of choices  for $g$ are limited:  $g\in \Z [X]$
must be a product of cyclotomic polynomials, and must have degree $4$;
moreover, $g(1)\neq 0$. With these  constraints there are exactly $14$
choices for $g$. It is known  that the hypergeometric $H(f,g)$ is also
the  monodromy  group  associated  to  a  family  of  {\it  Calabi-Yau
threefolds}   fibering   over    the   thrice   punctured   projective
line. Moreover, these threefolds turn up in mirror symmetry. \\

In \cite{AESZ}  (see also  \cite{CEYY}), the  question of  thinness or
arithmeticity of these groups  was first raised. Theorem \ref{zariski}
and  its  proof enables  us  to  prove  that  the monodromy  group  is
arithmetic in three of these cases; later Singh \cite{Sin} adapted the
method to prove arithmeticity in four  more cases.  On the other hand,
Brav  and   Thomas  \cite{Br-Th}  have   proved  that  $7$   of  these
hypergeometric groups are thin. In particular , they prove

\begin{theorem}   \label{dworkfamily}   (Brav  and   Thomas)   Suppose
$f(X)=(X-1)^4$ and  $g(X)=\frac{X^5-1}{X-1}$. Then  the hypergeometric
group   $H(f,g)\subset   Sp_{\Omega}(\Z)$   (is   Zariski   dense   in
$Sp_{\Omega}$  and)  has  infinite   index  in  $Sp_{\Omega}(\Z)$;  in
particular, it is a thin monodromy group.
\end{theorem}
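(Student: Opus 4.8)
The plan is to prove thinness by the Brav--Thomas ping-pong method: I will exhibit $H(f,g)$ (or a finite-index subgroup of it) as a free product of cyclic groups acting on projective space, and then derive a contradiction with the hypothesis that it has finite index in $Sp_\Omega(\Z)$, using the fact that $Sp_\Omega(\Z)$ contains a copy of $\Z^3$. First I would record the generators explicitly. Since $f(X)=(X-1)^4$, the companion matrix $A$ (the image of $h_\infty$) is maximally unipotent: $(A-I)^4=0$ while $(A-I)^3\neq 0$, so $A$ has infinite order and consists of a single Jordan block. Since $g(X)=1+X+X^2+X^3+X^4=\Phi_5(X)$ is the fifth cyclotomic polynomial, the companion matrix $B$ (with $h_0\mapsto B^{-1}$) satisfies $B^5=I$ and has the primitive fifth roots of unity as eigenvalues, so $B$ is elliptic of order $5$. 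By Theorem~\ref{hypergeometricmonodromy} the element $C=A^{-1}B$ is a symplectic transvection, and $H(f,g)=\langle A,B\rangle=\langle A,C\rangle$ preserves $\Omega$ with Zariski closure $Sp_\Omega$ (Theorem~\ref{zariski}, with $c=f(0)/g(0)=1$).

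The heart of the argument is a ping-pong construction on $\mathbb{P}^3(\R)=\mathbb{P}(\R^4)$. I would look for two suitable elements (say $A$ and $B$, or well-chosen powers/conjugates of them) together with disjoint open ``Schottky'' sets $X_A,X_B\subset\mathbb{P}^3(\R)$ satisfying $A^k(X_B)\subseteq X_A$ for every nonzero integer $k$ and $B^k(X_A)\subseteq X_B$ for $k\in\{1,2,3,4\}$; the ping-pong lemma then forces $H(f,g)\cong\langle A\rangle * \langle B\rangle\cong \Z * \Z/5\Z$. The main obstacle is exactly the verification of these inclusions, and it is genuinely delicate because neither generator is proximal: $A$ is parabolic, so its powers do not contract towards a single attracting point but merely drift along the flag fixed by the Jordan block, while $B$ is elliptic and has no north--south dynamics at all. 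Consequently $X_A$ cannot be a small neighbourhood of a fixed point; it must be an $A$-absorbing ``tube'' around the entire orbit, and one must control the polynomial-in-$k$ growth of $A^k$ to confirm $A^k(X_B)\subseteq X_A$ uniformly in $k$. This is the computational core: choosing explicit cones in $\R^4$ adapted to the Jordan filtration of $A$ and to the rotation $B$, and then checking the finitely many elliptic inclusions together with the infinite parabolic family of inclusions by direct estimates on the matrix entries.

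Granting such a free-product decomposition, I would finish as follows. The group $\Z * \Z/5\Z$ is a free product of cyclic groups, hence acts on a tree with finite or cyclic stabilisers and is therefore virtually free; in particular every abelian subgroup of $H(f,g)$ is virtually cyclic, so $H(f,g)$ contains no subgroup isomorphic to $\Z^2$. On the other hand, the unipotent radical of the Siegel parabolic of $Sp_\Omega$ is abelian of dimension $3$, and its integral points furnish a copy of $\Z^3$ inside $Sp_\Omega(\Z)$. If $H(f,g)$ had finite index in $Sp_\Omega(\Z)$, it would meet this $\Z^3$ in a finite-index subgroup, hence would contain $\Z^3\supset\Z^2$, contradicting virtual freeness. (Alternatively, $Sp_\Omega(\Z)$ is a lattice in the higher-rank group $Sp_4(\R)$ and so has Kazhdan's property (T), which no non-elementary virtually free group can have.) Therefore $H(f,g)$ has infinite index in $Sp_\Omega(\Z)$; that is, it is thin.
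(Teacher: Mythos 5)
First, note that the paper itself does not prove Theorem \ref{dworkfamily}: it is stated as a result of Brav and Thomas and simply attributed to \cite{Br-Th}, so there is no internal proof to compare your argument against. Your outline does correctly identify the strategy of the actual Brav--Thomas proof: one plays ping-pong on suitable subsets of $\mathbb{P}^3(\R)$ (equivalently, on cones in $\R^4$) to split the monodromy group as a free or amalgamated product, and then observes that a finite-index subgroup of $Sp_{\Omega}(\Z)$, being a lattice in the higher-rank group $Sp_4(\R)$, has property (T) and hence property (FA), so it admits no nontrivial splitting. Your preliminary observations --- that $A$ is a single unipotent Jordan block, that $B$ has order $5$ with the primitive fifth roots of unity as eigenvalues, that $C=A^{-1}B$ is a symplectic transvection, and that the Zariski closure is $Sp_{\Omega}$ with $c=1$ --- are all correct, as is the concluding step in either of the two forms you give (a virtually free group contains no $\Z^2$, while a finite-index subgroup of $Sp_{\Omega}(\Z)$ meets the integral points of the Siegel unipotent radical in a copy of $\Z^3$; alternatively, property (T) rules out non-elementary virtually free groups).

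The genuine gap is that the middle of the argument --- the only part with real mathematical content --- is not carried out. You write ``Granting such a free-product decomposition,'' but producing that decomposition \emph{is} the theorem: one must exhibit explicit sets $X_A,X_B$ and verify the ping-pong inclusions, and as you yourself observe this is delicate precisely because $A$ is parabolic and $B$ is elliptic, so neither has the north--south dynamics that makes Schottky-type arguments routine. Nothing in your write-up establishes that such sets exist for the players $\langle A\rangle$ and $\langle B\rangle$, nor that the outcome is the plain free product $\Z * \Z/5\Z$ rather than, say, an amalgamated product over a nontrivial subgroup, which is what such arguments tend to produce when neither player is proximal and which is why the published proof must choose its elements and cones with considerable care. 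This last point also matters for your finish: an amalgam $G_1 *_C G_2$ with $C$ infinite need not be virtually free and can perfectly well contain $\Z^2$, so the ``no $\Z^2$'' conclusion is tied to obtaining a genuine free product, whereas the property (T)/(FA) argument works for any nontrivial splitting and is the robust way to conclude. As it stands, your proposal is an accurate description of what a proof would look like, not a proof.
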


To  sum  up,  out  of  these fourteen  families,  $7$  are  arithmetic
(\cite{Sin-Ven}, \cite{Sin})  and $7$ are thin  \cite{Br-Th}.  Theorem
\ref{dworkfamily}  is  the first  example  of  a ``higher-rank''  thin
monodromy group whose  Zariski closure is a simple group.

\subsection{Questions} As  was mentioned  before, there is  no general
criterion  to determine  when a  group is  thin or  not. Consider  the
hypergeometric $H(f,g)$ associated to
\[  f(X)=(X-1)^n, \quad  g(X)=\frac{X^{n+1}-1}{X-1},\] say,  with even
$n$. It is easy to deduce from \cite{Beu-Hec} that $H(f,g)$ is Zariski
dense in $Sp_n$.  When $n=4$, this  is the group considered in Theorem
\ref{dworkfamily} and is  thin. However, for $n\geq6$ and  even, it is
not known  if the group  $H(f,g)$ has  infinite index in  the integral
symplectic group. In particular, let us consider the subgroup $\Gamma$ of
$Sp_6(\Z)\simeq Sp_{\Omega}(\Z)$  generated by the  companion matrices
of $(X-1)^6, \quad  \frac{X^7-1}{X-1}$.  It  is not known  if $\Gamma  $ has
finite index or not.


\begin{thebibliography}{JPSH}


\bibitem[AESZ]{AESZ} Almkvist, Enckevort, Duco van Straten, W.Zudilin,
Tables of Calabi-Yau Equations, October 2010, arXiv:math/0507/v2 \\

\bibitem[Beu]{Beu}  F.Beukers,  Notes  on Differential  Equations  and
Hypergeometric Functions, HGF course 2009 in Utrecht. \\

\bibitem[Beu-Hec]{Beu-Hec} F.Beukers and  G Heckman, Monodromy for the
hypergeometric  Function $_nF_{n-1}$,  Invent. math  {\bf  95} (1989),
no. 2, 325-254. \\

\bibitem[Br-Th]{Br-Th} C Brav and H. Thomas, Thin monodromy in $Sp_4$,
Compositio Math. {\bf 150}, Issue 3, 333-343. \\

\bibitem[CEYY]{CEYY}  Y-H  Chen,  C  Erdenberger,  Y.  Yang,  N.  Yui,
Monodromy  of  Picard-Fuchs   differential  equations  for  Calabi-Yau
threefolds, J reine.angew. Math. {\bf 616} (2008), 167-203. \\

\bibitem[Cod-Lev]{Cod-Lev}  E.Coddington  and  N.Levinson,  Theory  of
Ordinary  Differential  Equations, International  Series  in Pure  and
Applied  Mathematics, Mcgraw Hill  Book Company,  NewYork -  Toronto -
London, 1955. \\

\bibitem[FMS]{FMS}  E.Fuchs, C  Meiri,  P.Sarnak Hyperbolic  Monodromy
groups for the hypergeometric equation and Cartan Involutions, Journal
of  the European  Math  Society,  Volume {\bf  16},  Issue 8,  (2014),
1617-1671. \\

\bibitem[Le]{Le}  A.H.M Levelt,  Hypergeometric Functions  I  and II,
Nederland  Akad Wetensch Proc  Ser A  64 Indag  Math {\bf  23} (1961),
361-373, 373-385. \\

\bibitem[Sar]{Sar}  P.Sarnak, Notes  on  thin matrix  groups, in  Thin
groups  and Superstrong  Approximation,  MSRI  Publications {\bf  61},
Cambridge University Press, Cambridge, 2014.  \\


\bibitem[Sin]{Sin}  S.Singh,  Srithmeticity   of  four  hypergeometric
groups associated to Calabi-Yau threefolds,  IMRN (2015), {\bf no 18},
8874-8889. \\

\bibitem[Sin-Ven]{Sin-Ven}     S.Singh      and     T.N.Venkataramana,
Arithmeticity of  certain symplectic hypergeometric groups,  Duke Math
J. {\bf 163} (2014), no 3, 591-617.



\end{thebibliography}
\end{document}